\newtheorem{theorem}{Theorem}[subsection]
\newtheorem{corollary}[theorem]{Corollary}
\newtheorem{proposition}[theorem]{Proposition}
\newtheorem{lemma}[theorem]{Lemma}
\newtheorem{lettertheorem}{Theorem}
\newtheorem{definition}[theorem]{Definition}
\theoremstyle{definition}
\theoremstyle{remark}
\declaretheorem[name=Remark,sibling=theorem,qed={\lower-0.3ex\hbox{$\diamond$}}]{remark}
\declaretheorem[name=Notation,unnumbered,qed={\lower-0.3ex\hbox{$\diamond$}}]{notation}
\numberwithin{equation}{section}
\renewcommand{\AA}{\mathbb{A}}
\newcommand{\Af}{\AA_{\mathrm{f}}}
\newcommand{\CC}{\mathbb{C}}
\newcommand{\FF}{\mathbb{F}}
\newcommand{\GG}{\mathbb{G}}
\newcommand{\QQ}{\mathbb{Q}}
\newcommand{\Qp}{\QQ_p}
\newcommand{\Qbar}{\overline{\QQ}}
\newcommand{\TT}{\mathbb{T}}
\newcommand{\ZZ}{\mathbb{Z}}
\newcommand{\Zp}{\ZZ_p}
\newcommand{\fm}{\mathfrak{m}}
\newcommand{\fp}{\mathfrak{p}}
\newcommand{\fq}{\mathfrak{q}}
\newcommand{\fN}{\mathfrak{N}}
\newcommand{\fP}{\mathfrak{P}}
\newcommand{\fS}{\mathfrak{S}}
\newenvironment{smatrix}{\left( \begin{smallmatrix}} {\end{smallmatrix} \right)}
\newcommand{\tbt}[4]{\begin{pmatrix}#1 & #2 \\ #3 & #4\end{pmatrix}}
\newcommand{\stbt}[4]{\begin{smatrix}#1 & #2 \\ #3 & #4\end{smatrix}}
\DeclareMathOperator{\AF}{AF}
\DeclareMathOperator{\GL}{GL}
\DeclareMathOperator{\Gal}{Gal}
\DeclareMathOperator{\pr}{pr}
\DeclareMathOperator{\Hom}{Hom}
\DeclareMathOperator{\Res}{Res}
\DeclareMathOperator{\Frob}{Frob}
\DeclareMathOperator{\Iw}{Iw}
\DeclareMathOperator{\Nm}{Nm}
\DeclareMathOperator{\Spec}{Spec}
\DeclareMathOperator{\Sym}{Sym}
\DeclareMathOperator{\TSym}{TSym}
\DeclareMathOperator{\mom}{mom}
\DeclareMathOperator{\myPr}{Pr}\renewcommand{\Pr}{\myPr}
\DeclareMathOperator{\As}{As}
\newcommand{\cA}{\mathcal{A}}
\newcommand{\cE}{\mathcal{E}}
\newcommand{\cF}{\mathcal{F}}
\newcommand{\cH}{\mathcal{H}}
\newcommand{\cO}{\mathcal{O}}
\newcommand{\cR}{\mathcal{R}}
\newcommand{\cS}{\mathcal{S}}
\newcommand{\cT}{\mathcal{T}}
\newcommand{\cU}{\mathcal{U}}
\newcommand{\cW}{\mathcal{W}}
\newcommand{\uPi}{\underline{\Pi}}
\newcommand{\ch}{\mathrm{ch}}
\newcommand{\et}{\text{\textup{\'et}}}
\newcommand{\into}{\hookrightarrow}
\newcommand{\sH}{\mathscr{H}}
\newcommand{\sph}{\mathrm{sph}}
\newcommand{\ord}{\mathrm{ord}}
\newcommand{\rig}{\mathrm{rig}}
\newcommand{\ad}{\mathrm{ad}}
\numberwithin{equation}{section}
\renewcommand{\le}{\leqslant}
\renewcommand{\ge}{\geqslant}
\renewcommand{\geq}{\geqslant}
\author[David Loeffler]{David Loeffler \orcidlink{0000-0001-9069-1877}} 
\address[David Loeffler]{Faculty of Mathematics and Computer Science, UniDistance Switzerland, Schinerstrasse 18, CH-3900 Brig, Switzerland}
\email{david.loeffler@unidistance.ch}
\author[Arshay Sheth]{Arshay Sheth \orcidlink{0000-0002-8271-4036}}
\address[Arshay Sheth]{Mathematics Institute, Zeeman Building, University of Warwick, Coventry, CV4 7AL, UK}
\email{arshay.sheth@warwick.ac.uk}
\thanks{Both authors gratefully acknowledge financial support from the European Research Council under the European Union’s Horizon 2020 research and innovation programme (Grant agreement No. 101001051 — Shimura varieties and the Birch--Swinnerton-Dyer conjecture).}
\date{}
\title{The Asai--Flach Euler system in $p$-adic families}
\begin{document}

\setlength\marginparwidth{10pt}
\newcommand{\DLnote}[1]{{\color{red}\marginpar{\textcolor{red}{$\spadesuit$}} #1 -- DL}}
\newcommand{\AS}[1]
{\textcolor{purple}{\textsf{AS: [#1]}}}

\begin{abstract}
 We show that the Euler system for the Asai representation corresponding to a Hilbert modular eigenform over a real quadratic field, constructed by Lei, Loeffler and Zerbes (2018), can be interpolated $p$-adically as the Hilbert modular form varies in a Hida family. This work is used as an important input in recent work of Grossi, Loeffler and Zerbes (2025) on the proof of the Bloch--Kato conjecture in analytic rank zero for the Asai representation. 
\end{abstract}

 \maketitle

\setcounter{tocdepth}{1} \tableofcontents

\section{Introduction}

  This article is a sequel to the second author's paper \cite{She25} and is motivated by the theme of using $p$-adic variation of global cohomology classes to make progress on conjectures about special values of $L$-functions. This theme has its origins in the work of Bertolini--Darmon--Rotger \cite{BDR15a, BDR15b}, where they studied Beilinson--Flach elements, which are canonical global cohomology classes in the cohomology of the tensor product of $p$-adic Galois representations of two weight two modular forms. They showed that these classes vary $p$-adically in Hida families, and used this in a crucial way to prove the Birch and Swinnerton--Dyer conjecture in analytic rank zero for the $\rho$-isotypic component of the Mordell--Weil group of an elliptic curve, where $\rho$ is an odd irreducible Artin representation. Subsequent work by Lei--Loeffler--Zerbes \cite{LLZ14} constructed an Euler system, a compatible collection of cohomology classes over cyclotomic fields, of which the Beilinson--Flach elements are the bottom layer. This construction was extended to pairs of higher-weight modular forms by Kings--Loeffler--Zerbes in \cite{KLZ20}. 
 
  In both cases, the authors showed that the entire Euler system also varies $p$-adically in Hida families, and used this to derive further arithmetic applications. For instance, Kings--Loeffler--Zerbes proved the Bloch--Kato conjecture in analytic rank zero for the Galois representation associated to the tensor product of two modular forms, and as a corollary proved the finiteness of the Selmer group of an elliptic curve twisted by a two-dimensional odd irreducible Artin representation when the associated $L$-value does not vanish. 
 
  In this paper, we consider the Asai--Flach Euler system, introduced in \cite{LLZ18}, which is an Euler system for the Asai representation corresponding to a Hilbert modular eigenform over a real quadratic field. We show that this Euler system interpolates in Hida families. In a similar spirit to the works described above, our results have found applications in recent work of Grossi--Loeffler--Zerbes \cite{GLZ25} where they prove the Bloch--Kato conjecture in analytic rank zero for the Asai representation. Although our results are similar in outline to those described above for the Rankin--Selberg case, the arguments are significantly more complicated owing to the role of the global units of the field. We describe our main results in more detail below. 

 \subsection{Overview of results} 
  
  Our first main result is an extension of a theorem from \cite{She25}, interpolating the cohomology of Hilbert modular varieties in families. Let $F$ be a totally real field unramified above $p$; and let $\cO$ be the ring of integers of some large enough extension of $\Qp$ (containing all embeddings of $F$). For any sufficiently small tame level $K^{(p)}$, we show in \S \ref{controlthm} that there exists a graded module $e'_{\ord} H^*_{\Iw}(Y_G(K_\infty)_{\Qbar}, \cO)$, with actions of Hecke operators, the Galois group $\Gal(\Qbar / \QQ)$, and an Iwasawa algebra $\Lambda$ in $[F : \QQ] + 1 + \delta_F$ variables (where $\delta_F$ is the Leopoldt defect for $F$). 
  
  \begin{lettertheorem}[Theorem \ref{thm:classicity}]
   There is a Tor spectral sequence relating the module  \newline $e'_{\ord} H^*_{\Iw}(Y_G(K_\infty)_{\Qbar}, \cO)$  to the $U_p'$-ordinary part of the \'etale cohomology of the Shimura variety of Iwahori level at $p$ with coefficients in any algebraic representation of $\operatorname{Res}_{F/\QQ}\GL_2$ whose central character factors through the norm map.
  \end{lettertheorem}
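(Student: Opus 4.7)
The plan is to follow the standard Hida-theoretic control-theorem template, adapted to the Hilbert modular setting of \cite{She25}. For a dominant algebraic weight $\lambda$ of $\Res_{F/\QQ}\GL_2$ whose central character factors through $\Nm_{F/\QQ}$, I would first associate to $\lambda$ a continuous character $\sigma_\lambda : \Lambda \to \cO$; the hypothesis on the central character is precisely what makes $\sigma_\lambda$ factor through the $[F:\QQ]+1+\delta_F$ variables of $\Lambda$, which only captures the full $p$-adic torus up to the piece of the centre indexed by the global units modulo Leopoldt.

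Next, I would construct a Shapiro-type morphism of complexes of continuous $\Gal(\Qbar/\QQ)$-modules
\[
 \mathrm{R}\Gamma_{\et}\bigl(Y_G(K_\infty)_{\Qbar}, \cO\bigr) \otimes^{\mathbf{L}}_\Lambda \cO(\sigma_\lambda) \longrightarrow \mathrm{R}\Gamma_{\et}\bigl(Y_G(K_0)_{\Qbar}, \cV_\lambda\bigr),
\]
obtained by realising each $Y_G(K_n)$ as an \'etale cover of $Y_G(K_0)$, identifying the constant sheaf $\cO$ on $Y_G(K_n)$ with a sheaf on $Y_G(K_0)$ built from $\cO$-valued functions on $K_0/K_n$, passing to the inverse limit to obtain a pro-sheaf of $\Lambda$-modules, and then pairing with the highest-weight vector in $V_\lambda$ to recover $\cV_\lambda$ after specialisation along $\sigma_\lambda$.

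Applying the ordinary projector $e'_{\ord}$ attached to $U_p'$ is the key step. I would argue that this morphism becomes a quasi-isomorphism after $e'_{\ord}$: on the ordinary part only the pro-$p$ Iwahori-fixed component of the coefficient sheaf survives, and evaluating $\sigma_\lambda$ recovers exactly this component. Combined with the finiteness of ordinary cohomology as a $\Lambda$-module (established in \cite{She25}), which guarantees that $e'_{\ord} \mathrm{R}\Gamma_{\et}(Y_G(K_\infty)_{\Qbar}, \cO)$ is quasi-isomorphic to a bounded complex of perfect $\Lambda$-modules, the hyperderived Grothendieck spectral sequence for the above derived tensor product reads
\[
 E_2^{i,j} = \operatorname{Tor}^\Lambda_{-i}\!\bigl(e'_{\ord} H^j_{\Iw}(Y_G(K_\infty)_{\Qbar}, \cO),\, \cO(\sigma_\lambda)\bigr) \Longrightarrow e'_{\ord} H^{i+j}_{\et}\bigl(Y_G(K_0)_{\Qbar}, \cV_\lambda\bigr),
\]
which is the Tor spectral sequence asserted by the theorem.

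The main obstacle, I expect, is the presence of the $\delta_F$ extra variables from the Leopoldt defect. In the Rankin--Selberg setting of \cite{KLZ20} no analogous complication arises, but here the centre of $\Res_{F/\QQ}\GL_2$ contributes a piece isomorphic up to finite index to the $p$-adic closure of the global units of $F$, and $\Lambda$ sees this piece only through a non-canonical splitting. Verifying that the central-character assumption on $\lambda$ indeed makes $\cV_\lambda$ insensitive to these extra directions, and that the Shapiro map descends compatibly as a morphism in the derived category (rather than merely as a map on cohomology groups), is the technically delicate part; once the derived-level statement is in hand, the Tor spectral sequence is purely formal.
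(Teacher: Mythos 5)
Your outline is essentially the argument the paper uses (via citation to Corollary 3.14 of \cite{She25}): form the pro-system over the tower, specialise along the character $\sigma_\lambda$ determined by $\lambda$, apply $e'_{\ord}$, and extract the Tor spectral sequence from the resulting derived tensor product, with the central-character hypothesis being exactly what makes $\sigma_\lambda$ factor through the $(d+1+\delta_F)$-variable algebra $\Lambda$ (the new wrinkle relative to the trivial-central-character case of \cite{She25}). One small imprecision: the mechanism by which the moment map becomes a quasi-isomorphism after $e'_{\ord}$ is not that ``only the pro-$p$ Iwahori-fixed component survives'' but rather a $U_p'$-slope computation on the weight filtration of the coefficient module --- the ordinary idempotent kills all graded pieces except the highest-weight line because $U_p'$ acts on the others with positive valuation --- and it is this filtration-and-slope argument that carries the real content of the control theorem.
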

  
  This extends an earlier construction in \cite{She25} in which the coefficient sheaf was assumed to have trivial central character (and the Iwasawa algebra had only $[F : \QQ]$ variables). Closely related results are already in the literature due to Hida \cite{Hid86,Hid88}, Dimitrov \cite{Dim13}, and Mauger \cite{Mau04}; but our formulation is somewhat different (inspired by the work of Ohta \cite{ohta99} for the modular curve) and is more convenient for relations to Euler systems.
  
  For our remaining results we suppose $[F : \QQ] = 2$.
  
  \begin{lettertheorem}[Theorem \ref{b2}] \label{b1}
   We can find a cohomology class $$z_\infty \in H^1_{\Iw}(\QQ(\mu_{p^\infty}), e'_{\ord} H^2_{\Iw}(Y_G(K_\infty)_{\Qbar}, \cO)(2)),$$ whose specializations at algebraic weights $(\lambda, h)$ for $\Res_{F/\QQ}(\GL_2) \times \GL_1$ satisfying suitable inequalities are the Asai--Flach elements constructed in \cite{LLZ18}.
  \end{lettertheorem}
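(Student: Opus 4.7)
The plan is to lift the finite-level Asai--Flach construction of \cite{LLZ18} to Iwasawa cohomology and then invoke Theorem A to identify the specialization at any algebraic weight with the classical Asai--Flach class. The geometric input is the inclusion $\iota: \GL_2 \into G = \Res_{F/\QQ} \GL_2$ of $\QQ$-algebraic groups, which induces a closed immersion of Shimura varieties $Y_1(N) \into Y_G(K)$ of codimension one; the finite-level Asai--Flach class is the Gysin pushforward of a Siegel-unit/Eisenstein class on $Y_1(N)$, followed by Hochschild--Serre.

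First, I would assemble the universal input on the modular curve side: the compatible system of Siegel-unit classes, as the level at $p$ varies, fits into a canonical ``big'' class in the Iwasawa cohomology of the tower $Y_1(Np^\infty)$ with coefficients in a pro-sheaf of Iwasawa modules constructed via Kings' moment map. This already encodes the cyclotomic variable $h$ via moments of elliptic units, in parallel with the Beilinson--Flach construction in \cite{KLZ20}. Second, I would push this universal class forward along $\iota$ and apply the ordinary idempotent $e'_{\ord}$ for $U_p'$ to obtain the candidate $z_\infty$ in $H^1_{\Iw}(\QQ(\mu_{p^\infty}), e'_{\ord} H^2_{\Iw}(Y_G(K_\infty)_{\Qbar}, \cO)(2))$, using the functoriality of étale Iwasawa cohomology in the tower.

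Third, to verify the specialization property, I would invoke Theorem A, which identifies (via a Tor spectral sequence) the specialization at an algebraic weight $\lambda$ of $e'_{\ord} H^2_{\Iw}(Y_G(K_\infty)_{\Qbar}, \cO)$ with the $U_p'$-ordinary part of the \'etale cohomology of $Y_G(K)$ at Iwahori level with coefficients attached to $\lambda$. The compatibility of $\iota_*$, the ordinary projector, and the moment map with specialization then reduces the identification of the specialization of $z_\infty$ with the Asai--Flach class of \cite{LLZ18} to the explicit finite-level formula in that paper, applied at the appropriate weight.

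The main obstacle I expect is the careful bookkeeping of coefficient sheaves and normalizations in the presence of the global units $\cO_F^\times$, as emphasized in the introduction. The coefficient sheaves appearing in Theorem A come from representations of $G$ whose central character must factor through the norm map, which is precisely what forces the Iwasawa algebra to have $[F:\QQ]+1+\delta_F$ variables; one must produce moment classes on $Y_1(N)$ that are compatible with this additional tower of levels under $\iota_*$, and track their transformation under $\cO_F^\times$ through the pushforward. A secondary technicality is to show that the Tor spectral sequence of Theorem A behaves well in the weight range specified by the ``suitable inequalities'' in the statement, so that the abstract comparison of Iwasawa modules lifts to an actual equality of cohomology classes rather than merely on an associated graded piece.
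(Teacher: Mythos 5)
Your high-level plan — build a big Eisenstein/Siegel-unit class in Iwasawa cohomology, push it forward into the cohomology of the $G$-tower, apply $e'_{\ord}$, and check specializations — has the right shape, and in fact the first half of it is exactly what \cite{LLZ18} already did. Theorem 8.2.3 of that paper produces the universal class ${}_c\mathcal{AF}_{1,\fN p, a}$ as an element of $H^1_{\Iw}\big(\QQ(\mu_{p^\infty}), e'_{\ord} H^2_{\Iw}(Y_1^*(\fN p^\infty)_{\Qbar}, \cO)(2)\big)$, where $Y_1^*$ is the Shimura variety for the intermediate group $G^* = G \times_{\Res^F_\QQ \GG_m} \GG_m$. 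Your first two steps (assembling the Siegel units on $Y_1(Np^\infty)$ and pushing forward along the codimension-one embedding) reconstruct that class, but the genuinely new content of Theorem B is the passage from $G^*$ to $G$, which your proposal does not isolate as a separate step.

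This is where a real gap opens up. The comparison map $\jmath_n : Y_1^*(\fN p^n) \to Y_G(K_n)$ is \emph{not} a morphism of $\QQ$-varieties; it only exists after base change to $\QQ(\zeta_{p^n})$, because one must first identify $Y_1^*(\fN p^n)_{\QQ(\zeta_{p^n})}$ with the Shimura variety of the smaller level $U^*_{11}(\fN p^n)$ (determinant $\equiv 1 \bmod p^n$). Consequently $\jmath_{\infty, *}$ is Galois-equivariant only up to a $\Lambda^\times$-valued character twist of $\Gal(\QQ(\mu_{p^\infty})/\QQ)$, and if you simply push forward along $\iota$ without accounting for this, your candidate $z_\infty$ lands in a cyclotomic twist of the claimed module rather than in $e'_{\ord} H^2_{\Iw}(Y_G(K_\infty)_{\Qbar}, \cO)(2)$ itself. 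This is precisely the ``normalisation in the presence of global units'' you flag at the end, but merely naming the problem is not the same as solving it: the paper's definition of $z_\infty$ as $\jmath_{\infty,*}({}_c\mathcal{AF}_{1,\fN p})$ with the determinant twist built in is what makes the statement literally true. A secondary point: the specialization identification in the paper does not run through the Tor spectral sequence of Theorem A, but through the direct push-pull compatibility $\jmath_{n,*}\circ\mom^{\lambda^*}_n = \mom^{\lambda}_n \circ \jmath_{\infty,*}$ (Proposition \ref{mmcompatibility}) together with the fact that ${}_c\AF^{\cF_\alpha}_1$ is \emph{defined} as a $G^*$-moment of ${}_c\mathcal{AF}_{1,\fN p}$. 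Invoking the derived control theorem here is unnecessary and obscures the elementary cup-product computation that actually does the work.
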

  
  Theorem \ref{b1} is fairly straightforward to prove once the cohomology groups involved have been constructed; but it is not so easy to apply in practice, firstly since the module structure of $H^*_{\Iw}(Y_G(K_\infty)_{\Qbar}, \cO)$ may be complicated, and secondly because the construction only sees Asai--Flach classes with nontrivial levels at $p$. Hence we also prove a much more precise theorem after projecting to the eigenspace corresponding to an ordinary family $\uPi$ over an affinoid disc $\mathcal{C}$ in weight space.
  
  \begin{lettertheorem}[Theorem \ref{c2}]
   There exists a family of Galois representations $M(\uPi)^*$ which is free of rank 4 over $\mathcal{C}$, whose specializations at algebraic weights $\lambda \in \mathcal{C}$ are canonically isomorphic to the Asai Galois representations associated to the specializations $\Pi[\lambda]$ of $\uPi$. Moreover, there is a class
   \[ {}_c \mathcal{AF}(\uPi) \in H^1_{\Iw}(\QQ(\mu_{p^\infty}), M(\uPi)^*)\]
   whose specializations at appropriate characters $(\lambda, h)$ are explicit scalar multiples of the (prime-to-$p$ level) Asai--Flach classes associated to the specializations $\uPi[\lambda]$.
  \end{lettertheorem}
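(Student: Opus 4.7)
The plan is to combine Theorem \ref{b1} with the control theorem \ref{thm:classicity} by projecting everything to the eigenspace cut out by the Hida family $\uPi$. The family $\uPi$ determines a ring homomorphism $\lambda_{\uPi}$ from the cuspidal component of the big ordinary Hecke algebra acting on $e'_{\ord} H^2_{\Iw}(Y_G(K_\infty)_{\Qbar}, \cO)$ into the affinoid algebra $\cO(\mathcal{C})$. I would define $M(\uPi)^*$ as the $\uPi$-eigenspace, namely the quotient of $e'_{\ord} H^2_{\Iw}(Y_G(K_\infty)_{\Qbar}, \cO) \otimes_\Lambda \cO(\mathcal{C})$ on which the Hecke algebra acts through $\lambda_{\uPi}$. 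The Tor spectral sequence of Theorem \ref{thm:classicity} identifies each specialization at an algebraic weight $\lambda \in \mathcal{C}$ with the $\Pi[\lambda]$-isotypic part of the Iwahori-level ordinary \'etale cohomology of the corresponding Hilbert modular variety; by multiplicity one for Hilbert modular forms together with ordinarity, this is four-dimensional over $\cO$ and carries the Asai representation of $\Pi[\lambda]$. A Nakayama argument, possibly after shrinking $\mathcal{C}$, then upgrades the pointwise statement to freeness of rank $4$ over $\cO(\mathcal{C})$, and keeping track of the Galois action at each classical specialization yields the canonical interpolation property.

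To construct the class, project the class $z_\infty$ from Theorem \ref{b1} through the surjection $e'_{\ord} H^2_{\Iw}(Y_G(K_\infty)_{\Qbar}, \cO) \onto M(\uPi)^*$ to obtain an element of $H^1_{\Iw}(\QQ(\mu_{p^\infty}), M(\uPi)^*)$. The parameter $c$ enters exactly as in \cite{KLZ20}: one replaces the naive class by a difference of Hecke-twisted classes in order to obtain an integrally-defined element, eliminating denominators coming from the Eisenstein part of the cohomology. For the interpolation property, specialize at an algebraic weight $(\lambda, h)$. Theorem \ref{b1} identifies the specialization of $z_\infty$ with the Iwahori-level Asai--Flach class of \cite{LLZ18}, and the comparison between the Iwahori-level class and the prime-to-$p$ level class factors through a degeneracy morphism at $p$, which on the ordinary eigenspace of $\uPi[\lambda]$ acts by an explicit Euler factor in the $U_p'$-eigenvalue; this produces the stated scalar multiple.

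The main obstacle lies in the first step: establishing freeness of rank $4$ and compatibility of the specializations. The Tor spectral sequence of Theorem \ref{thm:classicity} is a priori not concentrated in a single degree, and the Iwasawa algebra $\Lambda$ has $[F:\QQ]+1+\delta_F$ variables, of which $\delta_F$ are genuinely excess compared to the dimension of the classical weight space. These ``extra'' directions risk producing torsion contributions that could obstruct the Nakayama argument or distort the Galois representation by spurious factors. One must show that after localization at the maximal ideal cut out by the cuspidal family $\uPi$ these higher Tor-terms vanish, which amounts to controlling the Eisenstein cohomology of $\Res_{F/\QQ}\GL_2$ and the structure of $H^1$ and $H^3$ of the Hilbert modular surface in a neighbourhood of $\uPi$. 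The computation of the $p$-part Euler factor in the third step is technically delicate but essentially reduces to a Hecke-algebra calculation in the spirit of \cite{LLZ18}.
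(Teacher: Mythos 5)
Your outline matches the paper's proof closely in structure: define $M(\uPi)^*$ by localizing the big ordinary Iwasawa cohomology at the Hecke eigensystem of $\uPi$, apply the control theorem plus Nakayama to get freeness, push forward $z_\infty$ from Theorem~\ref{b1}, and then translate Iwahori-level specializations to prime-to-$p$ level ones. The paper does exactly this in Theorem~\ref{thm:family}, \cref{cor:classicalinterp}, \cref{b2}, \cref{int1}, and \cref{c2}. Two points deserve scrutiny.

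First, your worry about the spectral sequence's higher Tor terms — that ``one must control the Eisenstein cohomology of $\Res_{F/\QQ}\GL_2$ and the structure of $H^1$ and $H^3$'' — somewhat misdiagnoses the issue. After localizing the spectral sequence at the maximal ideal $\mathfrak{M}_{\Pi^\vee} \otimes \fP$, the abutment is the $\Pi^\vee$-isotypic part of ordinary Iwahori-level cohomology, which is concentrated in degree $d=2$ and of dimension $2^d$ simply because $\Pi$ is cuspidal; no further analysis of Eisenstein cohomology is needed. From there a Nakayama / local-flatness argument forces $M^i_\fP = 0$ for $i \ne d$ and $M^d_\fP$ free of rank $2^d$, and the spectral sequence degenerates. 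So the freeness step is cheaper than you anticipate; the $\delta_F$ ``excess'' variables in $\Lambda$ cause no trouble once you are localized at the cuspidal system.

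Second, and more substantively, you significantly underestimate the difficulty of the step comparing the Iwahori-level class with the prime-to-$p$ level class. You describe it as ``a degeneracy morphism at $p$, which on the ordinary eigenspace acts by an explicit Euler factor,'' and dismiss it as ``essentially a Hecke-algebra calculation.'' The paper devotes a full section (\S\ref{sect:pstab}) to this, and only for $p$ \emph{inert} is it a straightforward Hecke computation using the pushforward formulae of \cite{LLZ18}. For $p$ \emph{split}, the degeneracy maps at $p$ do not in general preserve the image of $Y_{G^*}$ inside $Y_G$ (unless the primes above $p$ are trivial in the narrow class group), so the direct Hecke-algebra approach breaks down. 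The paper instead packages the Asai--Flach class as the value of a linear functional $\mathrm{Z}^{\mathrm{mot},j}$ on local Whittaker $\times$ Schwartz data, invokes the multiplicity-one theorem of \cite{harrisscholl} to identify this with the local Rankin--Selberg zeta integral up to scalar, and then deduces the Euler factor $\cR(X)$ from an explicit local zeta integral computation. This multiplicity-one-plus-zeta-integral step is a genuine new ingredient that your proposal does not account for, and without it the claimed scalar in the interpolation formula would be unjustified in the split case. Finally, the $c$-smoothing you describe is already baked into the class ${}_c\mathcal{AF}_{1,\fN p}$ of \cite{LLZ18}; the present paper does not need to redo it.
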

  
%
%
%

\section{Background on Hilbert modular varieties}

 \subsection{Notation}
 
  We let $F$ be a totally real field of degree $d$ with ring of integers $\cO_F$ and discriminant $\Delta_F$; and we let $p$ be an odd prime unramified in $F$. We fix a numbering $\{\sigma_1, \dots, \sigma_d \}$ of the embeddings $F \hookrightarrow \mathbb{R}$. 
  
  We fix an isomorphism $\CC \cong \Qbar_p$, so that $\sigma_i$ can be regarded as embeddings $F \into \Qbar_p$; and let $E \subset \Qbar_p$ be a finite extension of $\QQ_p$ containing the images of the $\sigma_i$. We let $\cO$ be the integer ring of $E$.
  
  Let $\cH$ denote the upper half plane and let $\cH_F$ denote the set of elements of $F \otimes \CC$ of totally positive imaginary part; note that $\cH_F$ can be identified with the product of $d$ copies of $\cH$. We let $G$ be the algebraic group $\Res^F_{\QQ} \GL_2$ over $\QQ$.%

  \subsection{Shimura varieties and Hecke actions}

  For $K$ an open compact subgroup of $G(\Af)$, we have associated Shimura varieties
  \begin{align*} 
   Y_G(K)(\CC) &= G(\QQ)^+ \backslash [ G(\Af) \times \cH_F]/K.
  \end{align*}
  
  We shall always assume $K$ is \emph{sufficiently small} in the sense\footnote{Note that the corresponding definition in \cite{LLZ18} has a typo ($u \in \cO_F^{\times +}$); the formula in \cite{She25} is the correct one.} of \cite[Definition 2.2]{She25}, so that $Y_G(K)$ is a smooth variety over $\QQ$.

  We set $\TT_K(G) \coloneqq \ZZ[K\backslash G(\Af)/K]$ to be the Hecke algebra, acting on the right on $Y_G(K)$ via correspondences as in \cite[\S 2.5]{She25}.
  We obtain a left $\TT_K(G)$-module structure on $H^i_{\et}(Y_G(K)_{\Qbar}, \mathbb{F}_p)$ and $H^i_{\et}(Y_G(K)_{\Qbar}, \Zp)$ by contravariant functoriality. As in \emph{op.cit.}~we denote by $\TT \subset \TT_K(G)$ the spherical Hecke algebra, which is the commutative subalgebra generated by operators $\{\cT_v, \cS_v, \cS_v^{-1}\}$ for each prime $v$ at which $K$ is unramified.

 \subsection{Coefficient sheaves}
  \label{repns}
 
  For each dominant weight $\lambda$ of $G$, there is a unique (up to isomorphism) irreducible representation $V_\lambda$ of $G$ over $\Qbar$ of highest weight $\lambda$; and these representations are definable over $E$.
 
  We will only consider representations whose central character factors through the norm map; concretely, these are given by $\bigotimes_{i=1\dots d} (\Sym^{k_i} \otimes \det^{t_i})$ for integers $(k_1, \dots, k_d, t_1, \dots, t_d)$, with $k_i \ge 0$, satisfying the constraint $k_1 + 2t_1 = \dots = k_d + 2 t_d = w$ for some $w$. We call such weights \emph{pure}. For any sufficiently small level $K$, we have a functor (see \cite[\S 4.4]{LLZ18})
  \[
   \{\textrm{pure algebraic representations of $G$}\} \rightarrow \{\textrm{lisse \'etale $E$-sheaves on }Y_G(K) \}, 
  \]
  that maps the representation $V_{\lambda}$ to the \'etale sheaf $\sH^{ (\lambda)}$. We write $\sH^{[\lambda]}$ for the dual of $\sH^{(\lambda)}$. The cohomology of these sheaves has a natural action of $\TT_K(G)$; and for primes $\fq$ trivial in the ray class group modulo $Z_G \cap K$, $\cS_\fq$ acts as $N(\fq)^w$ on $\sH^{(\lambda)}$, and $N(\fq)^{-w}$ on $\sH^{[\lambda]}$.
  
  \begin{remark}
   Note that if $\lambda = (1, \dots, 1, 0, \dots, 0)$ for some $k$, then the pullback of $\sH^{(\lambda)}$ to the PEL Shimura variety for the subgroup $G^* \subset G$ (see \cref{def:Gstar} below) is a direct summand of the $d$-th \'etale cohomology of the universal abelian variety, while $\sH^{[\lambda]}$ corresponds to the $d$-th \'etale homology.
  \end{remark}
  \subsubsection*{Integral structures}
  
   We can define lisse \'etale sheaves of $\cO$-modules $\sH^{[\lambda]}_{\cO}$ (with $\sH^{[\lambda]} = \sH^{[\lambda]}_{\cO} \otimes_{\cO} E$) using any choice of \emph{admissible lattice} in $V_\lambda$, in the sense of \cite[Definition 2.3.1]{LRZ24}. We shall adopt the convention of using the \emph{minimal} admissible lattice, which corresponds to the symmetric tensors $\TSym^k$ used in \cite{LLZ18} (see Definition 6.1.2 of \emph{op.cit.}).
   
   Hence, for any $i$ and $K$, the cohomology group $H^i_{\et}\left(Y_G(K)_{\Qbar}, \sH^{[\lambda]}_{\cO}\right)$ is a finitely-generated $\cO$-module, and its image modulo torsion is an $\cO$-lattice in $H^i_{\et}\left(Y_G(K)_{\Qbar}, \sH^{[\lambda]}\right)$. These $\cO$-modules have natural actions of the prime-to-$p$ Hecke algebra $\ZZ[K^{(p)} \backslash G(\Af^{(p)}) / K^{(p)}]$; but they are not necessarily preserved by Hecke operators at $p$.

 \subsection{Normalized Hecke operators}
 
  We now suppose $K = K^{(p)} K_p$ for some $K_p \subset G(\Zp)$ having an Iwahori decomposition. For these levels we have the following Hecke operators: 
   
  \begin{definition}
   For $n \ge 0$ and any $\fp \mid p$, we write $\cU_\fp \in \ZZ[K_{n, p} \backslash G(\QQ_p) / K_{n, p}]$ for the double coset of $\tbt{\varpi_{\fp}}{}{}{1}$, where $\varpi_{\fp} \in (F \otimes \QQ_p)^\times$ maps to $p$ in $F_{\fp}$, and to $1$ in $F_{\fq}$ for primes $\fq \mid p$ other than $\fp$ (if any). Similarly, we write $\cU'_{\fp}$ for the double coset of $\tbt{\varpi_{\fp}^{-1}}{}{}{1}$. We define $\cU_p = \prod_{\fp \mid p} \cU_{\fp}$, and similarly $\cU_p'$.
  \end{definition}
  
  For non-trivial coefficient sheaves, the eigenvalues of these operators are not necessarily integral, so we shall renormalize the Hecke action.
   
  \begin{definition} We define the following Hecke operators on the cohomology of $\sH^{(\lambda)}$, for a pure weight $\lambda = (\underline{k}, \underline{t})$: \
   \begin{itemize}
    \item We let $U_p = p^h \cU_p$, where $h = -\sum_i t_i = \tfrac{1}{2}(\sum_i k_i - d w)$.
    \item For $\fp \mid p$, we let $I_{\fp}$ be the set of indices such that $\sigma_i : F \into \Qbar_p$ factors through $F_{\fp}$, and set $U_{\fp} = p^{h_{\fp}} \cU_{\fp}$, where $h_{\fp} = -\sum_{i \in I_{\fp}}t_i$.
   \end{itemize}
   These operators are well-defined on the cohomology of $\sH^{(\lambda)}_{\cO}$ (see \cite[\S 2.5]{LRZ24}). Similarly we define $U_p' = p^h \cU_p'$ and $U_{\fp}' = p^{h_{\fp}} \cU_\fp'$ acting on the cohomology of $\sH^{[\lambda]}_{\cO}$.
  \end{definition}
  
  Note that this normalization depends on our choice of embedding of the coefficient field into $\Qbar_p$. We define $e'_{\ord}$ for the ordinary projector $\lim_{n \to \infty} (U_p')^{n!}$ associated to $U_p'$. 

 \subsection{Asai Galois representations}

  Let $\lambda = (\underline{k}, \underline{t})$ be a pure weight, and let $\cF : G(\Af) \times \cH_F \to \CC$ be a Hilbert modular eigenform over $F$ of level $\fN \subseteq \cO_F$ and weight $(\underline{k} + (2, \dots, 2), \underline{t})$ (in the conventions of \cite[Section 4.1]{LLZ18}). Then the translates of $\cF$ generate an automorphic representation $\Pi$ of $G(\AA_F)$ which is cohomological of weight $\lambda$; and the central character of $\Pi$ maps a uniformizer at a prime $\fq$ to $N(\fq)^w \omega_{\Pi}(\fq)$, where $w \coloneqq k_1 + 2t_1 = \dots = k_d + 2t_d$ and $\omega_{\Pi}$ is a finite-order Hecke character modulo $\fN$ (the nebentype of $\cF$).
 
  \begin{remark}
   The $w$ of \cite{LLZ18} is $w + 2$ in the present notation, so this statement is consistent with the last paragraph of \S 4.1 of \emph{op.cit.}. (The ``$\Nm_{F/\QQ}(u)^{1-w}$'' in the paragraph following Notation 4.1.1 of \emph{op.cit.} is a typo, the exponent should be $2 - w$.)
  \end{remark}

  The representation $\Pi$ determines a ring homomorphism $\lambda_{\Pi} : \TT \to \CC$, where $\TT$ is the spherical Hecke algebra away from $\fN$, sending each Hecke operator to its eigenvalue acting on $\cF$; and the image of $\lambda_{\Pi}$ is contained in a finite extension of $\QQ$ (depending on $\Pi$). Via our isomorphism $\iota: \CC \cong \Qbar_p$ we regard $\lambda_{\Pi}$ as taking values in $\Qbar_p$. 
  
  \begin{theorem}[Blasius--Rogawski--Taylor]
   There exists a unique (up to isomorphism) irreducible 2-dimensional Galois representation
   \[ 
    \rho_{\Pi, \iota}: \Gal(\overline{F}/F) \rightarrow \GL_2(\Qbar_p) 
   \]
   such that for all primes $\fq \nmid \fN p$, the representation $\rho_{\Pi, \iota}$ is unramified at $\fq$ and we have 
   \[ \det(1-X\rho_{\Pi, \iota}(\Frob_\fq^{-1})) = 1 - \lambda_{\Pi}(\cT_{\fq})\cdot X + N(\fq) \lambda_{\Pi}(\cS_\fq) \cdot X^2,\]
   where $\Frob_\fq$ is the arithmetic Frobenius.
  \end{theorem}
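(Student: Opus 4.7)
The plan is to realise $\rho_{\Pi,\iota}$ inside the $p$-adic \'etale cohomology of a suitable Shimura variety, cut out by the Hecke eigensystem $\lambda_\Pi$, and then to verify the trace identity at unramified primes via Eichler--Shimura-type local-global compatibility. Uniqueness will follow from Chebotarev density applied to the primes $\fq \nmid \fN p$, together with semisimplicity (in fact irreducibility, by cuspidality of $\Pi$) of $\rho_{\Pi,\iota}$, so the main effort is existence.

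First, I would handle the case in which either $d = [F:\QQ]$ is odd, or there exists a finite place at which $\Pi$ is discrete series. Choose a quaternion algebra $B$ over $F$ ramified at $d-1$ archimedean places (and, if $d$ is even, at one finite place at which $\Pi$ is Steinberg); such a $B$ exists by Hasse--Brauer--Noether. By the Jacquet--Langlands correspondence $\Pi$ transfers to an automorphic representation $\Pi^B$ of $B^\times$, and the associated Shimura variety $\mathrm{Sh}_B$ is proper over its reflex field. Its $p$-adic \'etale cohomology with coefficients in the automorphic local system attached to $\lambda$ carries commuting actions of $\TT$ and $\Gal(\overline{F}/F)$; cutting out the $\lambda_\Pi$-isotypic component yields a Galois module of the form $\rho_{\Pi,\iota} \otimes V$ for a nonzero multiplicity space $V$, from which one extracts the desired 2-dimensional representation. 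The identity
\[ \det\!\bigl(1 - X \rho_{\Pi,\iota}(\Frob_\fq^{-1})\bigr) = 1 - \lambda_\Pi(\cT_\fq)\, X + N(\fq)\, \lambda_\Pi(\cS_\fq)\, X^2 \]
at unramified $\fq$ will then follow from the Eichler--Shimura congruence relation on $\mathrm{Sh}_B$ together with the unramified local Langlands correspondence.

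If $d$ is even and $\Pi$ is principal series at every finite place, no such $B$ exists, and I would instead use Taylor's $p$-adic congruence method. The plan is: fix an auxiliary prime $\ell \nmid \fN p$ and construct, for each $n \ge 1$, a cohomological Hilbert eigenform $\Pi_n$ which is congruent to $\Pi$ modulo $p^n$ but whose local component at $\ell$ is Steinberg, using Ribet-style level raising on Hilbert modular varieties. The previous case then produces Galois representations $\rho_n$ attached to each $\Pi_n$; passing to the associated pseudo-representations and taking a $p$-adic limit yields a pseudo-representation attached to $\Pi$, which by Rouquier--Nyssen (with irreducibility forced by cuspidality) comes from a genuine $2$-dimensional representation $\rho_{\Pi,\iota}$. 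Specialising the trace identity already known for each $\rho_n$ modulo $p^n$ and letting $n \to \infty$ concludes the argument.

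The hard part will be the second case: producing the level-raised congruent eigenforms $\{\Pi_n\}$ is a genuine level-raising theorem for Hilbert modular forms, requiring input from the mod-$p$ geometry of Hilbert modular varieties and an Ihara-type lemma, and keeping careful track of nebentypus and normalisations through the chain of congruences is delicate. Case 1, while technically non-trivial, fits the now-standard template for quaternionic Shimura varieties and the Jacquet--Langlands correspondence.
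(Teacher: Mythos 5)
The paper does not prove this theorem; it is quoted as a known external result, attributed to Blasius--Rogawski and Taylor (the reference in the text is to \cite[Theorem 2.2]{LLZ18}, which itself just cites the literature). So there is no ``paper's proof'' to compare against --- your sketch is reconstructing the argument from the original sources rather than reproving something the authors prove.

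That said, the sketch you give is indeed the standard historical route (Ohta, Carayol, Rogawski--Tunnell for low weight; Blasius--Rogawski and Taylor in general), and the two-case structure --- quaternionic Shimura curves plus Jacquet--Langlands when a suitable $B$ exists, and Taylor's $p$-adic congruence/pseudo-representation argument otherwise --- is the right one. Two small corrections. First, in Case 1 the auxiliary finite place of ramification for $B$ should be one at which $\Pi_v$ is \emph{discrete series} (special \emph{or} supercuspidal), not specifically Steinberg; Jacquet--Langlands requires discrete series at each ramified place, and restricting to Steinberg would leave a gap (a $\Pi$ that is supercuspidal at some finite place and principal series at all the others would fall through the cracks of your dichotomy). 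Second, the parenthetical ``in fact irreducibility, by cuspidality of $\Pi$'' is doing a lot of unacknowledged work: Taylor's pseudo-representation construction delivers a semisimple representation, and irreducibility is a separate theorem (it uses, e.g., the Ramanujan bound of Blasius, or a Ribet-style argument ruling out the Eisenstein shape); it does not fall out of cuspidality by itself. Neither point affects the overall correctness of the outline, but both would need to be addressed in a complete write-up.
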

  
  We recall that if $H \subseteq G$ are groups such that $[G:H] < \infty$, and $\rho$ is a representation of $H$ on some vector space $V$, there exists a representation $\left(\bigotimes\textrm{-Ind}\right)_H^{G}(\rho)$ of $G$ whose underlying vector space is $V^{\otimes [G : H]}$ with an appropriate action of $G$. If $[G : H] = 2$, this can be given explicitly by fixing a choice of $\sigma \in G - H$ and defining
  \[
   h \cdot (v \otimes w)= (h \cdot v) \otimes (\sigma^{-1}h\sigma \cdot w), \hspace{2mm} \sigma \cdot (v \otimes w) = (\sigma^2 \cdot w) \otimes v. 
  \]

  \begin{definition}
   We define the $2^d$-dimensional Asai Galois representation 
   \[
    \rho_{\Pi, \iota}^{\As}: \Gal(\Qbar/\QQ) \rightarrow \GL_4(\Qbar)
   \]
   by $\rho_{\Pi, \iota}^{\As} \coloneqq \left(\bigotimes\text{-}\mathrm{Ind}\right)_{\Gal(\overline{F}/ F)}^{\Gal(\Qbar/\QQ)} (\rho_{\Pi, \iota})$. 
  \end{definition}

  This has a canonical realisation in \'etale cohomology. Let us suppose our coefficient field $E$ contains the image of $\lambda_{\Pi}$. We write $Y_1(\fN)$ for the Shimura variety of level $\{ \stbt{*}{*}{0}{1} \bmod \fN\}$.

  \begin{theorem}[{Brylinski--Deligne, Nekov\'a\v{r}; see \cite[Theorem 2.2]{LLZ18}}]
   The space
   \[ 
    M_E(\Pi)\coloneqq H^d_{\et, c}\left(Y_1(\fN)_{\Qbar}, \sH^{(\lambda)}\right)[I_{\Pi}],
   \]
   where $I_{\Pi}$ is the kernel of the map $\lambda_{\Pi} \otimes 1 : \TT \otimes_{\ZZ} E \to E$, is a $2^d$-dimensional $E$-linear representation of $\Gal(\Qbar/\QQ)$ whose base-extension to $\Qbar$ is a canonical representative of the isomorphism class $\rho_{\Pi, \iota  }^{\As}$.
  \end{theorem}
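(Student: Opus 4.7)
The plan is to combine the Matsushima-style description of the Betti/étale cohomology of $Y_1(\fN)$ with known facts about the Galois representations attached to Hilbert-Blumenthal abelian varieties. I begin by establishing the dimension count, and then upgrade this to an identification of the Galois representation.

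First, by the Matsushima formula and its étale counterpart due to Faltings, the interior cohomology $H^d_!(Y_1(\fN)_{\Qbar}, \sH^{(\lambda)})$ decomposes over $\overline{E}$ as a direct sum over cuspidal automorphic representations $\pi$ of $G(\AA)$ of $\pi_f^{K_1(\fN)} \otimes H^d(\mathfrak{g}, K_\infty; \pi_\infty \otimes V_\lambda)$. For a newform of level exactly $\fN$, strong multiplicity one makes $\Pi_f^{K_1(\fN)}$ one-dimensional. The $(\mathfrak{g}, K_\infty)$-cohomology factorises as an external tensor product over the $d$ archimedean places of $F$, and for each $\GL_2(\RR)$ factor the cohomology of the corresponding discrete series in degree $1$ with the correct infinitesimal character is two-dimensional. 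Multiplying gives $\dim_E M_E(\Pi) = 2^d$; one must also check that the $\Pi$-eigenspace lies in the interior cohomology (so that compactly-supported and usual cohomology agree on it), which follows from cuspidality of $\Pi$ and the fact that the boundary/Eisenstein contributions are characterised by Hecke eigenvalues of a different shape.

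Next, for the Galois action, I would exploit the PEL structure on the variant Shimura variety for $G^*$ (which appears in \cref{def:Gstar}). Over $Y_{G^*}$ there is a universal Hilbert-Blumenthal abelian variety $A$ of relative dimension $d$ with multiplication by $\cO_F$, so that $R^1 \pi_* \Qp$ is a rank-$2$ module over $\cO_F \otimes \Qp \cong \prod_{\fp \mid p} \cO_{F_\fp}$. Decomposing along the embeddings $\sigma_i : F \into \overline{\Qp}$ gives $d$ rank-$2$ pieces, and the coefficient sheaf $\sH^{(\lambda)}$ for a pure weight $\lambda = (\underline k, \underline t)$ is (up to a twist by a power of the cyclotomic character accounting for $\underline t$) cut out of $\bigotimes_i \Sym^{k_i}$ of these pieces. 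Thus $H^d_{\et,c}(Y_{G^*}, \sH^{(\lambda)})$ sits naturally inside $\bigotimes_i H^*(\text{modular curve}, \Sym^{k_i})$, and the $\Pi$-isotypic part picks out exactly the tensor product of the two-dimensional Galois representations attached to the $d$ ``partial'' modular forms — which, by construction, is precisely the tensor induction of $\rho_{\Pi,\iota}$ from $\Gal(\overline F/F)$. The final step is to descend this identification from $Y_{G^*}$ to $Y_G$, using that the natural map between them is a disjoint union of quotients by a finite group of components acting trivially on the relevant Hecke eigenspaces.

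The main subtlety, and the part that deserves the most care, is tracking the $\Gal(\Qbar/\QQ)$-action (as opposed to just the $\Gal(\Qbar/F)$-action) on the tensor factors. The outer Galois group permutes the $d$ embeddings $\sigma_i$ through its action on $F$, and one must verify that this permutation on the geometric factors of $A$ matches the tensor-induction formula $\sigma \cdot (v \otimes w) = (\sigma^2 \cdot w) \otimes v$ spelled out before the definition of $\rho_{\Pi,\iota}^{\As}$. This is ultimately a consequence of the reciprocity law for the Shimura datum of $G$ (whose reflex field is $\QQ$), together with a careful analysis of how complex conjugation/Galois conjugation on $F$ interchanges the archimedean places and hence the discrete series factors in the Matsushima decomposition. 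Once this compatibility is in hand, the canonical character of the identification follows from the uniqueness part of the Blasius–Rogawski–Taylor theorem applied to $\rho_{\Pi,\iota}$.
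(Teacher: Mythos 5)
The paper does not prove this statement; it is cited as an established result of Brylinski--Deligne and Nekov\'a\v{r}, quoted verbatim from \cite[Theorem 2.2]{LLZ18}. So there is no internal argument to compare yours against, only the references.

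Your sketch captures the right overall strategy — Matsushima/Faltings for the dimension count, the PEL structure on $Y_{G^*}$ and the $\cO_F$-linear decomposition of the Tate module of the universal abelian variety to expose the tensor-induction shape of the Galois action, and a final descent from $G^*$ to $G$. However, one claim as written is false: $H^d_{\et,c}(Y_{G^*}, \sH^{(\lambda)})$ does not ``sit naturally inside $\bigotimes_i H^*(\text{modular curve}, \Sym^{k_i})$''. The Hilbert modular variety is not a product of modular curves and there is no geometric map realising such an embedding of cohomologies. What is true is a \emph{sheaf}-level statement: after base change to $\Qbar_p$, the local system $\cV = R^1\pi_*\Qp$ splits as $\bigoplus_i \cV_i$ along the embeddings $\sigma_i$, and $\sH^{(\lambda)}$ is a twist of $\bigotimes_i \Sym^{k_i}\cV_i$, with $\Gal(\Qbar/\QQ)$ permuting the indices. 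The tensor structure of the representation appears only after restricting to the Hecke-isotypic component — it is not a K\"unneth decomposition of the cohomology itself.

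The substantive gap is the step you label ``by construction'': the identification of the $\Pi$-isotypic piece with $\bigotimes_i \rho_{\Pi,\iota}^{\sigma_i}$. This is not a formal consequence of the sheaf decomposition. It is precisely the content of the Brylinski--Deligne and Nekov\'a\v{r} arguments, and it requires computing the characteristic polynomial of Frobenius on the $\Pi$-eigenspace at primes of good reduction via the Eichler--Shimura congruence relations for the Hilbert modular variety, then matching it against the Asai Euler factor of $\rho_{\Pi,\iota}$, using semisimplicity to conclude. Your appeal to the uniqueness in Blasius--Rogawski--Taylor handles the identification of the two-dimensional building blocks once the Frobenius comparison is done, but the congruence-relation calculation is the heart of the proof and cannot be absorbed into a one-line ``by construction''. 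The $G^* \to G$ descent also needs a little more care than ``acting trivially on the relevant eigenspaces'' — the two Shimura varieties have different sets of geometric components and the comparison of Hecke eigenspaces is carried out in \cite[\S 2--3]{LLZ18}. With the Frobenius comparison supplied and the descent made precise, your outline would become a complete proof.
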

  
  Via Poincar\'e duality, we have a canonical isomorphism
  \[ M_{E}(\Pi)^* \cong H^d_{\et}\left(Y_1(\fN)_{\Qbar}, \sH^{[\lambda]}(d)\right) \otimes_{\TT} \TT / \nu(I_{\Pi}), \]
  where $\nu$ is the anti-automorphism of the Hecke algebra given by $g \mapsto g^{-1}$.

  \begin{remark} 
   Note that the representation denoted $M_{L_v}(\cF)$ in \cite{LLZ18}, for $v \mid p$ the prime determined by $\iota$, is $M_{L_v}(\Pi)(t_1 + t_2)$ in our present notation.
  \end{remark}
  
 \subsection{P-stabilization maps}

  For representations $\Pi$ of prime-to-$p$ conductor, we shall need to compare the Galois representation $M_E(\Pi)$ with a related space defined using Iwahori level at $p$. Compare e.g.~\cite[\S 5.7]{KLZ17}. For $\fp \mid p$, write $\Iw_\fp$ for the upper-triangular Iwahori subgroup of $\GL_2(\cO_{F, \fp})$.
  
  \begin{definition}
   For each $\fp \mid p$, consider the two elements of $\ZZ[ \Iw_{\fp} \backslash \GL_2(F_{\fp}) / \GL_2(\cO_{F,\fp})]$ given by
   \[ \pr_{\fp, 1} \coloneqq [\mathrm{id}], \qquad \pr_{\fp, 2} \coloneqq [\stbt{1}{0}{0}{p}].\]
  \end{definition}
  
  We compute that
  \begin{align*}
   \cU_\fp \cdot \pr_{\fp, 1} &= \pr_{\fp, 1}\mathop{\cdot} \cT_\fp - \pr_{\fp, 2},&
   \cU_\fp \cdot \pr_{\fp, 2} &= N(\fp) \pr_{\fp_1} \mathop{\cdot} \cS_{\fp}.
  \end{align*}
  So if $\Pi_\fp$ is an irreducible unramified $\GL_2(F_{\fp})$-representation (defined over $E$), and we choose a root $\alpha_{\fp} \in E$ of $X^2 - \cT_{\fp} X + N(\fp) \cS_{\fp}$ acting on $\Pi_{\fp}^{\GL_2(\cO_{F,\fp})}$, then the Hecke operator
  \[ \Pr_{\fp, \alpha_{\fp}} \coloneqq \pr_{\fp, 1} - \tfrac{1}{\alpha_{\fp}} \pr_{\fp, 2} \in E[ \Iw_{\fp} \backslash \GL_2(F_{\fp}) / \GL_2(\cO_{F,\fp})]\]
  maps $\Pi_{\fp}^{\GL_2(\cO_{F,\fp})}$ isomorphically to the 1-dimensional subspace of $\Pi_{\fp}^{\Iw_{\fp}}$ on which $\cU_{\fp}$ acts as $\alpha_{\fp}$.
  
  For an unramified representation $\Pi_p = \prod_{\fp \mid p}$ of $G(\QQ_p)$, we choose a collection $\alpha = (\alpha_{\fp})_{\fp \mid p}$, which we term a \emph{$p$-refinement} of $\Pi_p$; and we let $\Pr_{p, \alpha} = \prod_{\fp} \Pr_{\fp, \alpha_{\fp}} \in E[\Iw_p \backslash G(\QQ_p) / G(\Zp)]$. We write $\Pr_{p, \alpha}^*$ for the map it induces on cohomology, and $\Pr_{p, \alpha, *}$ for its transpose (i.e.~the map induced by $\nu(\Pr_{p, \alpha})$).
  
  \begin{proposition}
   If $\Pi$ is a cohomological cuspidal automorphic representation of $G$ of level $\fN$, -to-$p$ level $\fN$, and $\alpha$ is a $p$-refinement of $\Pi$, then the map $\Pr_{p, \alpha}^*$ defines an isomorphism $M_E(\Pi) \xrightarrow{\ \cong\ } M_E(\Pi; \alpha)$, where 
   \[ M_E(\Pi; \alpha) \subseteq H^d_{\et}\left(Y_G(U_1(\fN)^{(p)} \Iw_p), \sH^{(\lambda)}\right)\]
   is the maximal subspace where the prime-to-$\fN p$ Hecke algebra $\TT^{(p)}$ acts via $\lambda_{\Pi}$, and $\cU_{\fp}$ acts as $\alpha_{\fp}$ for all $\fp \mid p$. 
   
   Similarly (in fact dually), $\Pr_{p, \alpha, *}$ gives an isomorphism
   $M_E(\Pi; \alpha)^* \xrightarrow{\ \cong\ }M_E(\Pi)^*$, where $M_E(\Pi; \alpha)^*$ is the maximal quotient of $H^d_{\et}\left(Y_G(U_1(\fN)^{(p)} \Iw_p), \sH^{[\lambda]}(2)\right)$ on which $\TT^{(p)}$ acts via $\lambda_{\Pi^\vee} = \lambda_{\Pi} \circ \nu$ and $\cU_{\fp}'$ acts as $\alpha_{\fp}\, \forall \fp \mid p$.\qed
  \end{proposition}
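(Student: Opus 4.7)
The plan is to reduce the global claim to a purely local statement at $p$, then globalise via the decomposition of \'etale cohomology into automorphic pieces together with strong multiplicity one.

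Since $\Pr_{p,\alpha} = \prod_{\fp \mid p}\Pr_{\fp,\alpha_\fp}$ is a product of commuting operators acting independently on the local tensor factors of $\Pi_p = \bigotimes_{\fp \mid p}\Pi_\fp$, the argument reduces at once to a single prime $\fp \mid p$. The local claim to establish is that $\Pr_{\fp,\alpha_\fp}$ sends the one-dimensional line $\Pi_\fp^{\GL_2(\cO_{F,\fp})}$ isomorphically onto the $\cU_\fp = \alpha_\fp$ eigenline of $\Pi_\fp^{\Iw_\fp}$. This will follow directly from the two relations
\begin{align*}
\cU_\fp \cdot \pr_{\fp,1} &= \pr_{\fp,1}\cdot \cT_\fp - \pr_{\fp,2}, &
\cU_\fp \cdot \pr_{\fp,2} &= N(\fp)\pr_{\fp,1}\cdot \cS_\fp,
\end{align*}
recorded just before the statement: on any vector on which $\cT_\fp$ and $\cS_\fp$ act by the eigenvalues determined by $\Pi_\fp$, the quadratic $X^2 - \cT_\fp X + N(\fp)\cS_\fp$ vanishes at $X = \alpha_\fp$, which forces $\cU_\fp \cdot \Pr_{\fp,\alpha_\fp} = \alpha_\fp \Pr_{\fp,\alpha_\fp}$; non-vanishing of the image is immediate, since the $\pr_{\fp,1}$-term acts as the identity embedding of the spherical line into the Iwahori-fixed subspace.

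To globalise, I would appeal to a Matsushima-style decomposition: after base-extension to $\Qbar_p$, the $\lambda_\Pi$-isotypic component of $\TT^{(p)}$ acting on $H^d_{\et}(Y_G(U_1(\fN)^{(p)} K_p)_{\Qbar}, \sH^{(\lambda)})$ is naturally $\Pi_p^{K_p} \otimes M_E(\Pi) \otimes_E \Qbar_p$ for $K_p \in \{G(\Zp), \Iw_p\}$. The essential input here is strong multiplicity one for cuspidal automorphic representations of $\GL_2/F$, which guarantees that no other $\Pi'$ of prime-to-$p$ level dividing $\fN$ can contribute to the same $\TT^{(p)}$-eigenspace; cuspidality of $\Pi$ rules out Eisenstein contributions. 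The local lemma then identifies the image of $\Pr_{p,\alpha}^*$ inside the Iwahori-level cohomology precisely with the subspace $M_E(\Pi;\alpha)$, yielding the first isomorphism.

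For the dual statement, note that $\Pr_{p,\alpha,*}$ is induced by $\nu(\Pr_{p,\alpha})$, and Poincar\'e duality intertwines $\Pr_{p,\alpha}^*$ on $\sH^{(\lambda)}$-cohomology with $\Pr_{p,\alpha,*}$ on $\sH^{[\lambda]}(d)$-cohomology, so the second isomorphism follows formally from the first together with the compatibility of $\nu$ with Poincar\'e duality. The main obstacle I anticipate is the bookkeeping of normalisations: one must check carefully that the $\cU_\fp'$-eigenvalue appearing on the dual side is indeed $\alpha_\fp$, and not the other Satake root $N(\fp)\cS_\fp/\alpha_\fp$. This reduces to computing the images of $\pr_{\fp,1}, \pr_{\fp,2}$ under the anti-involution $\nu$ using the Cartan decomposition at $\fp$, and matching the result against the definitions of $\cU_\fp'$ and of $M_E(\Pi;\alpha)^*$. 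Beyond this careful accounting, the argument should be routine.
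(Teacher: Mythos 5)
Your proposal is correct and follows the same route the paper intends: the paper gives no separate proof (the proposition ends with a \(\qed\) immediately after its statement, being treated as an immediate consequence of the local lemma recorded just beforehand), and you fill in precisely the standard globalization — Matsushima-type decomposition of \(H^d_{\et}\) into automorphic pieces, strong multiplicity one for cuspidal \(\GL_2/F\), and Poincar\'e duality combined with \(\nu\) for the dual half. One small nitpick: your justification of non-vanishing at the local step (``immediate, since the \(\pr_{\fp,1}\)-term acts as the identity embedding'') does not by itself rule out \(\pr_{\fp,2}v = \alpha_\fp\,\pr_{\fp,1}v\); what is really needed is that \(\pr_{\fp,1}\) and \(\pr_{\fp,2}\) send the spherical line to linearly independent vectors in the two-dimensional space \(\Pi_\fp^{\Iw_\fp}\), a standard fact which the paper in effect asserts in the sentence preceding the proposition.
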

  
  (Note that if the $\alpha_{\fp}$ are simple roots of the Hecke polynomials, the quotient $M_E(\Pi; \alpha)^*$ of $H^d_{\et}\left(Y_G(U_1(\fN)^{(p)} \Iw_p), \sH^{[\lambda]}(2)\right)$ lifts to a direct summand. This is conjectured to always hold; but it is automatic in the ordinary case, since the roots of the Hecke polynomials have distinct valuations.)
  
\section{Control theorems} \label{controlthm}
 
 We now recall and extend some results from the paper \cite{She25} of the first author. Our goal is to show that not only can cuspidal Hilbert modular forms (and their Galois representations) be deformed in $p$-adic families (which recovers well-known results of Hida), but also that the Asai Galois representations associated to such families have \emph{canonical} realisations in the cohomology of a suitable tower of Shimura varieties for $G$. 
 
 \subsection{Ordinary cohomology modules} 
 
  Let $K^{(p)} \subset G\big(\Af^{(p)}\big)$ be a compact open such that $K^{(p)} G(\Zp)$ (and hence any subgroup thereof) is sufficiently small.
  
  \begin{definition}
   Let $\cE_{K^{(p)}} = \{u \in \cO_F^\times : \stbt{u}{}{}{u} \in K^{(p)}\}$; and for each $n \in \mathbb{N}_{\geq 1}$, we let 
   \[ 
    K_{n, p} \coloneqq \left\{ g \in \GL_2(\cO_F \otimes \Zp) : 
    g \equiv \tbt{u}{*}{0}{u} \bmod p^n 
    \text{ for some } u \in \cE_{K^{(p)}} \right\}.
   \]
   When $n=0$, we define $K_{0, p} = \Iw_p$ to be the Iwahori subgroup of $G(\Zp)$, \textit{i.e.}, the subgroup of matrices which are upper triangular mod $p$. We let $K_n \coloneqq K^{(p)} K_{n, p}$.
  \end{definition}
 
  \begin{remark}
   This definition is motivated by the fact that $G(\Zp)$ does not act freely on $\varinjlim_{K_p}Y_G(K^{(p)}K_p)$; the closure $\overline{\cE}_{K^{(p)}}$ acts trivially (and the ``sufficiently small'' condition shows that the action of $G(\Zp) / \overline{\cE}_{K^{(p)}}$ is free).
  \end{remark}
  
  \begin{definition}
   Set $H^i_{\Iw}(Y_G(K_\infty)_{\Qbar}, \cO) \coloneqq \varprojlim_{n \ge 0} H^i_{\et}(Y_G(K_\infty)_{\Qbar}, \cO)$, where the limit is with respect to the pushforward maps.
  \end{definition}
  
  These groups are equipped with actions by the following objects (all commuting with one another):
  \begin{itemize}
   \item the prime-to-$p$ Hecke algebra $\TT_{K^{(p)}}$;
   \item the operators $U'_{\fp}$ for $\fp \mid p$; 
   \item the Galois group $\Gal(\Qbar / \QQ)$ (with inertia groups outside $pN \Delta_F$ acting trivially);
   \item the group $\fS \coloneqq T(\Zp) / \overline{\cE}_{K^{(p)}}$, where $T$ is the diagonal torus.
  \end{itemize}
  To simplify some formulae later, we shall compose the natural pullback action of $\fS$ with conjugation by the long Weyl element $w_0 = \stbt{0}{-1}{1}{0}$. Note that $\fS$ is an abelian $p$-adic Lie group of dimension $d + 1 + \delta_F$, where $\delta_F \ge 0$ is the defect in Leopoldt's conjecture for $F$. 
  
  \begin{remark}
   Using Poincar\'e duality, one can check that $e'_{\ord} H^*_{\Iw}(Y_G(K_\infty)_{\Qbar}, \cO)$ coincides with the module denoted $\cH_{11}( 0)$ in \cite[\S 3.2]{Dim13} (which is defined as the Pontryagin dual of a direct limit with $E / \cO$ coefficients). 
  \end{remark}
  
  We write $\fS_n$ for the image of $T(\Zp) \cap K_{n, p}$ in $\fS$, so the $\fS_n$ are open subgroups with $\bigcap_n \fS_n = \{1\}$ (and $\fS_0 = \fS$). We set $\Lambda = \cO[[\fS]]$, and more generally $\Lambda_n = \cO[[\fS_n]]$.
 
  \begin{theorem}
   The groups $e'_{\ord} H^i_{\Iw}\left(Y_G(K_\infty)_{\Qbar}, \cO\right)$ are finitely-generated modules over the Iwasawa algebra $\Lambda = \cO[[ \fS ]]$.
  \end{theorem}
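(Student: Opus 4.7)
The plan is to follow the standard Hida-style strategy (as in \cite{Hid86, Hid88, ohta99, Dim13, She25}), reducing finite generation over $\Lambda$ to a finiteness statement at finite level via topological Nakayama, and then deducing the latter from a control theorem on ordinary parts.

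First, for each $n \geq 0$ the group $H^i_{\et}(Y_G(K_n)_{\Qbar}, \cO)$ is a finitely generated $\cO$-module, by finiteness of \'etale cohomology of varieties of finite type over $\QQ$ with $\cO / p^m$-coefficients followed by passage to the limit. These form an inverse system under the pushforward maps, and the inverse limit $H^i_{\Iw}$ is compact in its natural topology. The group $\fS$ acts through its finite quotients $\fS / \fS_n$ as deck transformations of the tower, so the limit is a compact module over $\Lambda = \cO[[\fS]]$. The idempotent $e'_{\ord}$ is an inverse limit of projectors that commute with all these structures and preserves compactness; denote the resulting compact $\Lambda$-module by $M^i$.

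Second, apply the topological Nakayama lemma for compact modules over the complete local Noetherian ring $\Lambda$: finite generation of $M^i$ over $\Lambda$ is equivalent to the finiteness of $M^i / \fm_\Lambda M^i$ over the residue field $\FF = \cO / \fm_\cO$. Since $\fm_\Lambda$ is generated by $\fm_\cO$ together with the augmentation ideal $I_\fS$ of $\cO[[\fS]]$, this reduces to showing $M^i / I_\fS M^i$ is finitely generated over $\cO$. The \'etale cover $Y_G(K_\infty)_{\Qbar} \to Y_G(K_0)_{\Qbar}$ is pro-Galois with group $\fS$, and a Hochschild--Serre spectral sequence relates the $\fS$-coinvariants of $H^i_{\Iw}$ to the finite-level cohomology $H^i_{\et}(Y_G(K_0)_{\Qbar}, \cO)$ up to error terms involving the higher group cohomology of $\fS$. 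The essential input from Hida theory is that, after applying $e'_{\ord}$, these error terms are killed (the classical ``independence of weight'' phenomenon for $U_p'$-ordinary cohomology), producing an embedding of $\cO$-modules
\begin{equation*}
 M^i / I_\fS M^i \hookrightarrow e'_{\ord} H^i_{\et}(Y_G(K_0)_{\Qbar}, \cO)
\end{equation*}
with finite cokernel. The right-hand side is finitely generated over $\cO$ by finiteness at finite level, so the left-hand side is too.

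The main obstacle is to execute this control step uniformly across the tower: for each $n$ one must show that $e'_{\ord}$ annihilates the higher cohomology of $\fS / \fS_n$ acting on $H^*_{\et}(Y_G(K_n)_{\Qbar}, \cO)$, with bounds compatible with the inverse limit. This was established in \cite{She25} under the additional hypothesis of trivial central character (where $\fS$ has dimension $d$), and the extension here requires replaying the argument for the slightly enlarged group of dimension $d + 1 + \delta_F$. The extra ``$1 + \delta_F$'' directions come from the centre, on which $U_p'$ acts invertibly, so no new conceptual difficulty arises; alternatively, one can deduce the result from Dimitrov's control theorem in \cite{Dim13} after matching dualities, as indicated in the remark following the definition of $\fS$.
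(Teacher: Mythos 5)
Your overall strategy --- topological Nakayama to reduce $\Lambda$-finite generation to finiteness of $M^i / \fm_\Lambda M^i$, together with a control statement at finite level after applying $e'_{\ord}$ --- is the Hida/Ohta approach that \cite{She25} executes, and the paper's proof is simply a citation to \cite{She25} plus a check that the crucial hypothesis there still holds for the modified tower $(K_{n,p})$. So the route is the right one, but the central step of your reduction is misdescribed: the cover $Y_G(K_\infty)_{\Qbar} \to Y_G(K_0)_{\Qbar}$ is \emph{not} pro-Galois with group $\fS$. The subgroup $K_{n,p}$ is not even normal in $\Iw_p$, and the index $[\Iw_p : K_{n,p}]$ equals $p^{d(n-1)} \cdot |\fS/\fS_n|$, the factor $p^{d(n-1)}$ coming from the opposite unipotent direction (the lower-left entries, which range over $p(\cO_F\otimes\Zp) / p^n(\cO_F\otimes\Zp)$). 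There is therefore no Hochschild--Serre spectral sequence for $\fS$ to invoke. It is precisely $e'_{\ord}$, via Hida's contraction property of $U_p'$, that collapses this extra unipotent direction and makes the limit controllable over the \emph{abelian} Iwasawa algebra $\Lambda = \cO[[\fS]]$; that is the substantive Hida-theoretic input, and it is invisible from group cohomology of $\fS$ alone. Correspondingly, the asserted embedding $M^i / I_\fS M^i \hookrightarrow e'_{\ord} H^i_{\et}(Y_G(K_0)_{\Qbar}, \cO)$ with finite cokernel is not what the argument produces and would itself need proof; what one actually does is bound the mod-$p$ dimensions of $e'_{\ord} H^i_{\et}(Y_G(K_n)_{\Qbar}, \FF_p)$ uniformly in $n$.

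You also misidentify what needs to be re-checked when passing from the tower of \cite{She25} (trivial central character, $\fS$ of dimension $d$) to the present $K_{n,p}$. You attribute the absence of new difficulty to $U_p'$ acting invertibly on the centre. The paper instead emphasizes that the only thing to verify is that each $K_{n,p}$ contains $\overline{\cE}_{K^{(p)}}$, which is the hypothesis of Proposition 2.4 of \cite{She25}: this is what guarantees the tower is an honest finite \'etale covering rather than merely a covering of stacks, so that the arguments of \cite{She25}, \S 3 apply without change. Since the $K_{n,p}$ contain $\overline{\cE}_{K^{(p)}}$ by construction, nothing further is required; the behaviour of the central direction under $U_p'$ is not the relevant point.
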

  
  \begin{proof}
   A very closely analogous statement is proved in \S 3 of \cite{She25} using a slightly different family of subgroups $U_{n, p}$, rather than the $K_{n, p}$ considered above. However, all that is needed for the arguments to work is that each of the subgroups in the tower contains $\overline{\cE}_{K^{(p)}}$, which is the input for the crucial Proposition 2.4 of \cite{She25}; this is true by definition for $K_{n, p}$, so the argument works for these subgroups without change. (Compare Proposition 2.7.9 of \cite{LRZ24}, which is a similar statement for locally symmetric spaces for any group satisfying an additional condition --Milne's axiom ``SV5'' (\cite[pp.75]{Milne}) -- which implies that the analogue of the group $\overline{\cE}_{K^{(p)}}$ for these groups is trivial.)
  \end{proof}
  
 \subsection{Moment maps}
 
  For a pure, dominant integral weight $\lambda$, we have a \emph{moment map}
  \[ \mom^{\lambda}_n : H^i_{\Iw}\left(Y_G(K_\infty)_{\Qbar}, \cO\right) \to H^i_{\et}\left(Y_G(K_n)_{\Qbar}, \sH^{[\lambda]}_{\cO}\right)\]
  defined by cup-product with the highest-weight vector of $V_\lambda^\vee$ (regarded as a section of $\sH^{[\lambda]}_{\cO}$ over $Y_G(K_\infty)$) composed with projection to level $K_n$. Since the highest weight of $V_\lambda^\vee$ is $-w_0 \lambda$, and we have conjugated the $\fS$-action by $w_0$, this map factors through the tensor product $H^i_{\Iw}\left(Y_G(K_\infty)_{\Qbar}, \cO\right) \otimes_{\Lambda_n} \cO[\lambda]$. Moreover, it is compatible with the actions of $\Gal(\Qbar/\QQ)$, of $\TT$, and of the $U_\fp'$ for $\fp \mid p$ (since our normalization of $U_\fp'$ makes it act trivially on the highest-weight vector).
  
  \begin{theorem}[Derived control theorem]
   \label{thm:classicity}
   For any $n \ge 0$ there is a spectral sequence (supported in $i \le 0, j \ge 0$)
   \[ E_2^{ij} = \operatorname{Tor}_{-i}^{\Lambda_n}\Big(
    e'_{\ord} H^i_{\Iw}(Y_G(K_\infty)_{\Qbar}, \cO), 
    \cO[\lambda]\Big) \Longrightarrow 
    e'_\ord H^{i + j}_{\et}\left(Y_G(K_n)_{\Qbar}, \sH^{[\lambda]}_{\cO}\right),\]
   in which the edge maps $E_2^{0j} \to E_\infty^j$ are the maps
   \[ \tag{\dag}
    \mom_n^{\lambda} : e'_{\ord}H^j_{\Iw}(Y_G(K_\infty)_{\Qbar}, \cO) \otimes_{\Lambda_n} \cO[\lambda] \to e'_{\ord} H^j_{\et}\left(Y_G(K_n)_{\Qbar}, \sH^{[\lambda]}_{\cO}\right). 
   \]
  \end{theorem}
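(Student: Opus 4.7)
The plan is to establish a quasi-isomorphism
\[
 e'_{\ord} R\Gamma\bigl(Y_G(K_n)_{\Qbar}, \sH^{[\lambda]}_{\cO}\bigr) \simeq e'_{\ord} R\Gamma_{\Iw}\bigl(Y_G(K_\infty)_{\Qbar}, \cO\bigr) \otimes^{L}_{\Lambda_n} \cO[\lambda]
\]
in the derived category of $\Lambda_n$-modules. Granted this, the required spectral sequence is the standard hyper-Tor spectral sequence for a derived tensor product; its concentration in $i \le 0,\, j \ge 0$ is automatic; and the identification of the edge maps $E_2^{0,j} \to E_\infty^j$ with the moment maps $\mom_n^\lambda$ is forced by the construction.

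To construct the quasi-isomorphism I would adapt the strategy of \cite{She25} (compare also Proposition 2.7.9 of \cite{LRZ24}, and the modular-curve case of \cite{ohta99}). The projection $\pi_n : Y_G(K_\infty) \to Y_G(K_n)$ is a pro-\'etale principal cover for the image $\fS_n$ of $T(\Zp) \cap K_{n,p}$ in $\fS$, so the pullback $\pi_n^* \sH^{[\lambda]}_{\cO}$ is a constant sheaf with fibre $V_\lambda^\vee$. The highest-weight line — on which the $w_0$-conjugated $\fS$-action is through the character $\lambda$ — provides an $\fS_n$-equivariant surjection $V_\lambda^\vee \twoheadrightarrow \cO[\lambda]$. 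After applying $R\pi_{n*}$ and the ordinary projector $e'_{\ord}$, this surjection becomes a quasi-isomorphism: the projector annihilates contributions of all non-highest-weight lines because the renormalisation of $U_\fp'$ has been tuned so that only the highest-weight part survives. Compatibility with the Galois, Hecke, and $U_\fp'$-actions is automatic since these all commute with the $\fS$-action, and the highest-weight vector is a $U_\fp'$-eigenvector of eigenvalue $1$ by our normalisation choice.

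The main obstacle is transferring the argument of \cite{She25} from the tower $\{U_{n,p}\}$ used there (yielding an Iwasawa algebra of dimension $[F:\QQ]$) to the larger tower $\{K_{n,p}\}$ (yielding dimension $[F:\QQ] + 1 + \delta_F$). The additional variables encode the central-character direction together with the $\delta_F$-dimensional Leopoldt obstruction, and one must check that the moment-map construction factors through the quotient by $\overline{\cE}_{K^{(p)}}$; this is ensured by the purity hypothesis, equivalently by the fact that the central character of $V_\lambda$ factors through the norm map. As already observed in the proof of the preceding finite-generation result, the critical technical input — Proposition 2.4 of \cite{She25} — requires only that every subgroup of the tower contain $\overline{\cE}_{K^{(p)}}$, and this holds by definition for the $K_{n,p}$; the remainder of the argument then transplants to the present setting without change.
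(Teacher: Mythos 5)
Your overall framing---reducing to a derived tensor-product quasi-isomorphism, with the hyper-Tor spectral sequence following automatically and the transition from the $(U_{n,p})$-tower to the $(K_{n,p})$-tower handled exactly as in the preceding finite-generation theorem---matches the route in the paper (which simply cites \cite[Cor.~3.14]{She25} and notes that the argument transports unchanged). However, the sketch of how the quasi-isomorphism is actually produced contains two related errors that conceal exactly the hard part of the control theorem.

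First, $Y_G(K_\infty) \to Y_G(K_n)$ is \emph{not} a principal $\fS_n$-cover. The covering group is $K_{n,p}/\varprojlim_m K_{m,p}$, and $\bigcap_m K_{m,p} = N^+ \cdot \overline{\cE}_{K^{(p)}}$, so this quotient contains, besides the torus factor $\fS_n$, the $d$-dimensional lower-triangular unipotent direction (the $p^n\Zp$-congruence part of $N^-$). So the Galois group of the tower has dimension $2d+1+\delta_F$, while $\fS_n$ has dimension $d+1+\delta_F$. Consequently, the second error: $\pi_n^* \sH^{[\lambda]}_{\cO}$ is \emph{not} a constant sheaf, because $N^+ \subset \bigcap_m K_{m,p}$ does not act trivially on $V_\lambda^\vee$ for $\lambda \neq 0$. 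Only the highest-weight \emph{line} of $V_\lambda^\vee$ extends to a trivial sub-local-system over $Y_G(K_\infty)$ (this is what justifies the definition of $\mom_n^\lambda$); the whole sheaf does not trivialize on this tower.

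The substance of the control theorem is precisely the step you dispose of in one clause (``the projector annihilates contributions of all non-highest-weight lines''): one must show that, after applying $e'_{\ord}$, the cohomology is insensitive both to the extra $N^-$-direction in the covering group and to the non-highest-weight part of $V_\lambda^\vee$. This is Hida's independence-of-weight mechanism, which goes through a careful analysis of the $U_p'$-correspondence acting via $N^-$-shifted cosets; it is not a formal consequence of how $U_p'$ was normalized, and the normalization alone only arranges that $U_p'$ fixes the highest-weight vector. You need to either carry out that contraction argument (as \cite{She25}, following \cite{ohta99}, does via a comparison of traces / filtration of the induced module $\operatorname{Ind}_{K_{m,p}}^{K_{n,p}} V_\lambda^\vee$), or cite it explicitly, before the ``derived tensor product'' description is available.
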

  
  \begin{proof}
   This is proved in exactly the same way as Corollary 3.14 of \cite{She25}, which is the analogous statement for the level tower $(U_{n, p})_{n \ge 0}$ rather than $(K_{n, p})_{n \ge 0}$.
  \end{proof}
  
  \begin{remark}
   It follows from the Auslander--Buchsbaum formula that after inverting $p$, the set of $\lambda$ for which the $E_2^{ij}$ term is non-vanishing is contained in a closed subset of $\Spec \Lambda[1/p]$ of codimension at least $|i|$. So the map $(\dag)$ is bijective away from a codimension $\ge 1$ set, and injective away from a codimension $\ge 2$ set.
   
   Exactly as in \cite{She25}, using the cohomological vanishing results of Caraiani--Tamiozzo \cite{CT21}, if $\fm$ is a maximal ideal of $\TT \otimes \overline{\FF}_p$ whose associated mod $p$ Galois representation has non-solvable image, the localization $e'_{\ord} H^j_{\et}(Y_G(K_\infty)_{\Qbar}, \cO)_{\fm}$ is zero for $j \ne d$, and projective over $\Lambda$ for $j = d$. So the localization of $(\dag)$ at $\fm$ is an isomorphism for every $\lambda$. This recovers Dimitrov's exact control theorem \cite[Theorem 3.8(iii)]{Dim13}, under considerably weaker assumptions on $\fm$.
  \end{remark}

 \subsection{Existence of families}
 
  In this section we suppose our prime-to-$p$ level $K^{(p)}$ is given by $U_1(\fN)^{(p)}$ for some ideal $\fN \trianglelefteqslant \cO_F$ coprime to $p$, where
  \[ 
   U_1(\fN)^{(p)} = \left \{\gamma \in \GL_2(\widehat{\cO}_F^{(p)}): 
   \gamma \equiv \tbt{*}{*}{0}{1} \bmod \fN \right\}. 
  \]
  For simplicity, we suppose that $\fN$ does not divide 2, 3 or $\Delta_F$, which implies that $U_1(\fN)$, and hence any subgroup of it, is sufficiently small \cite[Lemma 2.1]{Dim09}. (The general case can be treated by taking invariants under a suitable auxiliary subgroup.)
  
  We now formulate the following definition. Let $\mathcal{C} = \operatorname{Max} A \subset \cW = (\operatorname{Spf} \Lambda)^\rig$ be an affinoid disc. To avoid vacuous cases we suppose that $\mathcal{C}$ contains at least one classical weight; it follows that classical weights are in fact Zariski-dense in $\mathcal{C}$ (at least if Leopoldt's conjecture holds for $F$).
  
  \begin{definition}
   An \emph{ordinary family of cuspidal automorphic representations} $\uPi$ over $\mathcal{C}$ consists of the following data:
   \begin{itemize}
    \item a ring homomorphism $\uPi : \TT \otimes \ZZ[\{ U_{\fp} : \fp \mid p\}] \to A$,
    \item a direct summand $M({\uPi})^*$ of $e'_{\ord} H^d_{\et}(Y_G(K_\infty)_{\Qbar}, \cO)(2) \otimes_{\Lambda} A$,
   \end{itemize}
   with the following properties:
   \begin{itemize}
    \item $M(\uPi)^*$ is free of rank $2^d$ over $A$ and stable under the actions of $\Gal(\overline{\QQ} / \QQ)$, $\TT$, and the $U_{\fp}'$ for $\fp \mid p$, with the Hecke operators acting via $\uPi \circ \nu$, where $\nu$ is the inverse map on the Hecke algebra.
   \item For each dominant integral weight $\lambda \in \mathcal{C}$, the specialization of $\uPi$ at $\lambda$ is the Hecke eigenvalue system associated to the ordinary refinement of $\Pi[\lambda]$, for some ordinary cuspidal automorphic representation $\Pi[\lambda]$ of weight $\lambda$ and conductor $\fN$ (with coefficient field contained in $E$).
   \item For each such $\lambda$, the map $\mom^\lambda_0$ restricts to an isomorphism between the weight $\lambda$ specialization of $M(\uPi)^*$, and the direct summand 
   \[ M(\Pi[\lambda], \alpha[\lambda])^* \subseteq e'_{\ord} H^d_{\et}(Y_G(K_0)_{\Qbar}, \sH^{[\lambda]})(2)\]
   defined above.
  \end{itemize}
  \end{definition}
  
  (Note that $M({\uPi})^*$ is not \emph{a priori} defined as the dual of anything, but it will interpolate the duals of Asai Galois representations, hence the notation.)
    
  Now let $\Pi$ be a cuspidal automorphic representation which is cohomological of weight $\lambda$, which has conductor $\fN$ and Hecke eigenvalues of $\Pi$ lie in $E$, and which is ordinary at $p$ (i.e.~$p^{-\sum_i t_i} \cT_p$ acts as a unit).
  
  \begin{theorem}\label{thm:family}
   There exists an open disc $\mathcal{C}$ and family $\uPi$ over $\mathcal{C}$ whose specialization in weight $\lambda$ is the given $\Pi$.
  \end{theorem}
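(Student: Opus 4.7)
The plan is to construct the family as a clopen component of the ordinary Hecke algebra near the point $(\Pi,\alpha)$. Let $M := e'_{\ord} H^d_{\Iw}(Y_G(K_\infty)_{\Qbar},\cO)(2)$, a finitely generated $\Lambda$-module with commuting actions of $\Gal(\Qbar/\QQ)$, the prime-to-$p$ Hecke algebra, and the $U'_{\fp}$ for $\fp\mid p$. Let $\TT^{\ord}\subseteq\End_\Lambda(M)$ be the image of $\TT\otimes\ZZ[\{U'_\fp\}]$; finite generation of $M$ forces $\TT^{\ord}$ to be a finite $\Lambda$-algebra, so the rigid-analytic space $X:=(\operatorname{Spf}\TT^{\ord})^\rig$ is finite over $\cW=(\operatorname{Spf}\Lambda)^\rig$. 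The Hecke eigensystem of the ordinary refinement of $\Pi$ (composed with $\nu$) determines a closed point $x\in X$ mapping to $\lambda\in\cW$, with residue field $E$.

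The first key step is to show that $x$ is a simple factor of the fiber $X_\lambda$. By Theorem~\ref{thm:classicity}, $X_\lambda$ surjects onto the spectrum of the ordinary Hecke algebra acting on $e'_{\ord}H^d_\et(Y_G(K_0)_{\Qbar},\sH^{[\lambda]})(2)\otimes E$; this finite-level cohomology is semisimple as a Hecke module by multiplicity one for cuspidal Hilbert forms. In the ordinary setting, the Hecke polynomial $X^2-\cT_\fp X+N(\fp)\cS_\fp$ at each $\fp\mid p$ has distinct roots (one $p$-adic unit, one not), so $\alpha$ selects a single $E$-factor, and $x$ corresponds to a simple factor in the fibre Hecke algebra. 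By the Hensel-type splitting for finite morphisms of rigid spaces, there is an affinoid disc $\cC=\Sp A\subset\cW$ containing $\lambda$ and an idempotent $e_\uPi\in A\otimes_\Lambda\TT^{\ord}$ exhibiting a decomposition $A\otimes_\Lambda\TT^{\ord}\cong A\times R$, where the first factor is $\Sp A$ itself (possibly after enlarging $E$, which is harmless since $E$ already contains all Hecke eigenvalues of $\Pi$). The projection onto the first factor defines $\uPi:\TT\otimes\ZZ[\{U_\fp\}]\to A$; Zariski density of classical weights in $\cC$ ensures that its specialization at each algebraic $\lambda'\in\cC$ is the eigensystem of an ordinary cuspidal $\Pi[\lambda']$ of weight $\lambda'$ and conductor $\fN$.

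For the module, set $M(\uPi)^*:=e_\uPi\cdot(M\otimes_\Lambda A)$. This is tautologically a direct summand of $M\otimes_\Lambda A$, is Galois-stable, and carries Hecke actions through $\uPi\circ\nu$. To see that $M(\uPi)^*$ is free of rank $2^d$ over $A$, I argue fiberwise: at any algebraic $\lambda'\in\cC$, localizing the control spectral sequence of Theorem~\ref{thm:classicity} at $e_\uPi$ collapses, because the local factor of $\TT^{\ord}$ cut out by $e_\uPi$ has become étale of degree one over $\Lambda$ on $\cC$; hence $\operatorname{Tor}^{\Lambda_0}_{-i}$ vanishes in the $e_\uPi$-direction and the edge map $\mom^{\lambda'}_0$ yields an isomorphism $M(\uPi)^*\otimes_A E\xrightarrow{\sim}M_E(\Pi[\lambda'];\alpha[\lambda'])^*$, which is $2^d$-dimensional. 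Density of classical weights forces the fibre rank of $M(\uPi)^*$ to equal $2^d$ at a Zariski-dense set of points of $\cC$, and the usual structure theory of finitely generated modules over a one-dimensional affinoid disc (shrinking $\cC$ to avoid the finitely many points where the rank could jump) promotes this to freeness of rank $2^d$ over $A$.

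The main obstacle is the uniform control of the higher Tor terms of Theorem~\ref{thm:classicity} after $e_\uPi$-localization: one needs that the étale structure of $X\to\cW$ at $x$, and the simplicity of the $(\Pi,\alpha)$-factor, persist over an honest affinoid neighborhood rather than only at $\lambda$. This requires ruling out pathological phenomena at nearby classical weights, most notably residual congruences between the ordinary $\Pi[\lambda']$ and a non-ordinary companion, or the appearance of a non-cuspidal eigensystem that specializes into the $e_\uPi$-branch. Both are avoided by shrinking $\cC$ so that the support of the cokernel of the edge map and the locus of non-étaleness of $\TT^{\ord}\to\Lambda$ in the $e_\uPi$-component are excluded; this uses exactly the classicity input packaged in Theorem~\ref{thm:classicity} together with the fact, built into the ordinary projector, that $U_p'$ acts invertibly on $M$ so non-ordinary neighbors cannot intervene.
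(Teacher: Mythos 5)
Your strategy---cut out an idempotent of the big ordinary Hecke algebra $\TT^{\ord}$ and define $M(\uPi)^*$ as the corresponding direct summand of $M\otimes_\Lambda A$---is in the same general spirit as the paper but takes a different and, as written, incomplete route. The paper works directly with the localization of the Iwasawa cohomology at the maximal ideal $\mathfrak{M}_{\Pi^\vee}\otimes\fP$ of $\TT\otimes\Lambda$: exactness of localization gives a localized version of the control spectral sequence whose abutment is the $\Pi^\vee$-isotypic part of the finite-level cohomology, known to be concentrated in degree $d$ and of dimension $2^d$ there. A descending Nakayama argument then kills $M^j_{\fP}$ for $j>d$; the resulting degeneration of the spectral sequence in degree $d-1$ forces $\operatorname{Tor}_1^{\Lambda_{\fP}}(M^d_{\fP},\Lambda_{\fP}/\fP)=0$, and the local criterion for flatness gives freeness of rank $2^d$ at the local ring. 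The idempotent, the étaleness of the Hecke branch, and the direct summand structure all fall out of this freeness and then spread out to a Zariski neighbourhood of $\lambda$, in which one chooses the affinoid $\cC$.

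The genuine gap in your write-up is at precisely the point your argument cannot shrink away: freeness at $\lambda$ itself. You deduce that the fibre rank of $M(\uPi)^*$ is $2^d$ at classical points $\lambda'$ near $\lambda$, and then invoke ``structure theory of finitely generated modules over a disc, shrinking $\cC$ to avoid the finitely many points where the rank could jump.'' But $\lambda$ must remain in $\cC$, since the specialization at $\lambda$ is required to be the given $\Pi$; you have no control on whether $\lambda$ is among the bad points. Related to this is a circularity: you assert that the spectral sequence collapses ``because the local factor of $\TT^{\ord}$ cut out by $e_\uPi$ has become étale of degree one over $\Lambda$,'' but étaleness of the Hecke algebra does not by itself imply vanishing of $\operatorname{Tor}^{\Lambda}_{>0}$ of the \emph{cohomology module} --- those are statements about different objects, and in fact the étaleness/simplicity of the Hecke branch at $\lambda$ is a \emph{consequence} of the module being free there, which is what you are trying to prove. (A minor further wrinkle: the surjection ``$X_\lambda\twoheadrightarrow$ spectrum of the finite-level Hecke algebra'' that you extract from the control theorem needs the edge map $\mom_0^\lambda$ to be onto, which the spectral sequence alone does not guarantee; again this is something the paper obtains as output of the Nakayama step rather than as input.) The observation that $x$ is a simple factor of the classical fibre, by multiplicity one and distinctness of the ordinary and non-ordinary roots, is correct and is morally the same input the paper uses, but you should feed it into a direct Nakayama/local-flatness argument at the local ring $\Lambda_{\fP}$ rather than into a Hensel splitting whose hypotheses you have not yet verified at the big (Iwasawa) level.
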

  
  \begin{proof}
   Let $M^i_{\fP}$ be the localization of $e'_{\ord} H^j_{\Iw}(Y_G(K_\infty)_{\Qbar}, \cO)$ at the ideal $\mathfrak{M}_{\Pi^\vee} \otimes \fP$ of $\TT \otimes_{\ZZ} \Lambda$, where $\mathfrak{M}_{\Pi^\vee} = \nu(\mathfrak{M}_{\Pi}) \triangleleft \TT\otimes E$ is the kernel of the Hecke eigenvalue system of $\Pi$, and $\fP \triangleleft \Lambda$ is the ideal corresponding to the character $\lambda$.
   
   Since localization is exact, we obtain a localized spectral sequence of finitely-generated $\Lambda_{\fP}$-modules with $E^2$ terms \[ \operatorname{Tor}_{-i}^{\Lambda_{\fP}}( M^j_{\fP}, \Lambda_{\fP}/\fP) \Rightarrow e'_{\ord} H^{i+j}_{\et}(Y_G(K_0), \sH^{[\lambda]})_{(\mathfrak{M}_{\Pi^\vee})}.\]
   The abutment is the $\Pi^\vee$-isotypic part of the cohomology, which is zero outside degree $d$ and has dimension $2^d$ in degree $d$. A calculation using Nakayama's lemma and the local criterion for flatness shows that $M^i_{\fP}$ is likewise zero for $i \ne d$ and free of rank $2^d$ over $\Lambda_{\fP}$ for $i = d$.
   
   This gives a direct summand of $M^d$ with the required properties after localizing to some Zariski-open neighbourhood of $\lambda$; and we can choose our affinoid disc $\mathcal{C}$ to be contained in this, since the affinoid $G$-topology is finer than the Zariski topology.
  \end{proof}
%
%
  
  \begin{corollary}\label{cor:classicalinterp}
   For a family $\uPi$ as in \cref{thm:family}, at each classical point $\mu \in \mathcal{C}$, the map $(\Pr_{\fp, \alpha})_* \circ \mom^\mu_0$ is an isomorphism between the specialization of $M(\uPi)^*$ at $\mu$ and $M_E(\Pi[\mu])^*$.
  \end{corollary}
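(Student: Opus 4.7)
The plan is to factor $(\Pr_{\fp, \alpha})_* \circ \mom^\mu_0$ as a composition of two isomorphisms that have already been established earlier in the paper. The content of the corollary is essentially that the moment map appearing in the definition of the family is compatible with the $p$-stabilization isomorphism from the Galois representation section, so the proof should require no genuinely new input.

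First, I would invoke the third clause in the definition of an ordinary family: at the classical weight $\mu \in \mathcal{C}$, the moment map $\mom^\mu_0$ restricts to an isomorphism between the specialization of $M(\uPi)^*$ at $\mu$ and the direct summand
\[ M(\Pi[\mu], \alpha[\mu])^* \subseteq e'_{\ord} H^d_{\et}(Y_G(K_0)_{\Qbar}, \sH^{[\mu]})(2), \]
where $\alpha[\mu]$ is the ordinary $p$-refinement of $\Pi[\mu]$. This direct summand coincides with $M_E(\Pi[\mu]; \alpha[\mu])^*$ from the $p$-stabilization section: both are cut out of the same cohomology group by the Hecke eigensystem $\lambda_{\Pi[\mu]^\vee}$ together with the condition $\cU'_\fp = \alpha_\fp[\mu]$ for all $\fp \mid p$, and ordinarity guarantees that $\alpha_\fp[\mu]$ is a simple root of the corresponding Hecke polynomial (of strictly smaller valuation than the other root), so the quotient appearing in the $p$-stabilization proposition lifts canonically to a direct summand, as noted in the parenthetical remark there.

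Second, the $p$-stabilization proposition, in its dual (transposed) form, provides an isomorphism
\[ \Pr_{p, \alpha, *} : M_E(\Pi[\mu]; \alpha[\mu])^* \xrightarrow{\ \cong\ } M_E(\Pi[\mu])^*. \]
Composing this with the isomorphism of the previous paragraph yields the claim.

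The only step requiring any real care is the identification in the first paragraph: one must track the sign conventions carefully (in particular the appearance of $\nu$ in the Hecke action on $M(\uPi)^*$, so that Hecke operators act via $\lambda_{\Pi^\vee}$ rather than $\lambda_\Pi$), and the identification depends crucially on ordinarity, which is what forces the relevant $\cU'_\fp$-eigenspace to be one-dimensional so that both objects are uniquely characterized by their Hecke eigensystems. I expect all the bookkeeping to live here, but no new conceptual input beyond the two earlier propositions is needed.
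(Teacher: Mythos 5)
Your proposal is correct and follows the same two-step factorization as the paper: invoke the third clause of the definition of an ordinary family to identify the specialization with $M_E(\Pi[\mu]; \alpha[\mu])^*$ via $\mom^\mu_0$, then apply the dual $p$-stabilization isomorphism $\Pr_{p,\alpha,*}$. The additional bookkeeping you flag (the $\nu$-twist and the role of ordinarity in lifting the quotient to a direct summand) is accurate but is implicitly absorbed in the paper's one-line proof.
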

  
  \begin{proof}
   \cref{thm:family} identifies the weight $\mu$ specialization with $M_E(\Pi[\mu]; \alpha[\mu])^*$, where $\alpha[\mu]$ is the ordinary $p$-refinement of $\Pi[\mu]$. We have seen that $\Pr_{p, \alpha, *}$ identifies this space with $M_E(\Pi[\mu])^*$.
  \end{proof}
 
  \begin{remark}
   One can formulate more general results interpolating representations $\Pi$ which are ramified at $p$, but have non-zero $K_{n, p}$-invariants for some $n$ with ordinary $U_p$-eigenvalues; these are precisely the representations which are \emph{nearly ordinary} in the sense of \cite{Dim13}. We leave the details to the interested reader.
  \end{remark}

\section{Asai--Flach classes}

 We now assume (for the remainder of this paper) that $[F : \QQ] = 2$. 
 
 \subsection{Spherical and Iwahori-level classes} 
 
  We now briefly recall the cohomology classes constructed in \cite{LLZ18} that we shall interpolate $p$-adically. Let $\Pi$ be an automorphic representation of conductor $\fN$ (coprime to $p$), cohomological in weight $\lambda = (k_1, k_2, t_1, t_2)$.
 
  \begin{definition}
   For $0 \le j \le \min(k_1, k_2)$, let us write $h = t_1 + t_2 + j$, and let
   \[ \AF^{[\Pi, j]}_{\et} \in H^1(\QQ, M_E(\Pi)^*(-h)) \]
   be the class defined in \cite[Definition 4.4.6]{LLZ18} for the newform $\cF \in \Pi^{U_1(\fN)}$ generating $\Pi$.
  \end{definition}
  
  This is well-defined, since the space $M_E(\Pi)^*$ is identical to the $M_E(\cF)^*$ of \emph{op.cit.} with its Galois action twisted by $t_1 + t_2$. We also need to consider ``$p$-stabilized'' versions of these classes. Let $\alpha = (\alpha_{\fp})_{\fp \mid p}$ be a $p$-refinement of $\Pi$.
   
  \begin{definition}
   For $j$ as before, let
   \[ \AF^{[\Pi, \alpha, j]}_{\et} \in H^1(\QQ, M_E(\Pi, \alpha)^*(-h)) \]
   be the class defined \emph{loc.cit.}~taking $\cF$ to be the ordinary $p$-stabilized eigenform $\cF_\alpha \in \Pi^{K_0}$ corresponding to $\alpha$.
  \end{definition}
  
  It is important to note that although $\Pr_{p, \alpha, *}$ maps $M_E(\Pi, \alpha)^*$ isomorphically to $M_E(\Pi)^*$, the two versions of the Asai--Flach class do not match up; instead, they satisfy the following more subtle compatibility.
  
  \begin{theorem}[$p$-stabilization relation]\label{thm:pstab}
   These classes are related by
   \[ \left(\operatorname{Pr}_{p, \alpha}\right)_* \left( \AF^{[\Pi, \alpha, j]}_{\et}\right) = \cR(p^{-(1+h)}) \cdot \AF_{\et}^{[\Pi, j]}, \]
   where $\cR(X)$ is the following polynomial:
   \begin{itemize} 
   \item for $p$ inert,
   \[ \cR(X) = \left( 1 - \beta_p X\right) \left( 1 - \alpha_p \beta_p X^2\right) \]
   \item for $p = \fp_1 \fp_2$ split,
   \[ \cR(X) = \left( 1 - \alpha_{\fp_1}\beta_{\fp_2} X\right) \left( 1 - \beta_{\fp_1} \alpha_{\fp_2} X\right)\left( 1 - \beta_{\fp_1} \beta_{\fp_2} X\right). \]
   \end{itemize}
  \end{theorem}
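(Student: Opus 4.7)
My plan is to unfold the explicit construction of both Asai--Flach classes and compare them via an operator calculation on the $\GL_2/\QQ$-side modular curve, following the model of the analogous $p$-stabilization result for Beilinson--Flach classes in \cite[\S 5.7]{KLZ17}.

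First, I would recall that $\AF^{[\Pi, j]}_{\et}$ and $\AF^{[\Pi, \alpha, j]}_{\et}$ are both built as pushforwards, along the natural embedding $\iota : \GL_2/\QQ \hookrightarrow G$, of Eisenstein--Beilinson classes on modular curves: the spherical version uses a class on $Y_1(N)$, while the $p$-stabilized version uses an Iwahori-level analogue on $Y_1(Np)$ related to the spherical one by a standard norm-compatibility (an ``Euler factor at $p$'' relation on the $\GL_2/\QQ$ side). By functoriality of pushforward and cup-product, the theorem reduces to identifying the composition $(\Pr_{p, \alpha})_* \circ \iota^{\Iw}_*$ with $\iota^{\sph}_* \circ \Phi_{p, \alpha}$ for an explicit operator $\Phi_{p, \alpha}$ on the $\GL_2/\QQ$-side cohomology, and then evaluating $\Phi_{p, \alpha}$ on the spherical Eisenstein class.

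Second, I would compute $\Phi_{p, \alpha}$ by decomposing the double cosets defining $\pr_{\fp, 1}$ and $\pr_{\fp, 2}$ and intersecting them with the image of $\iota$. The combinatorics split according to whether $p$ is split or inert in $F$. In the split case $p = \fp_1\fp_2$, one has $F \otimes \QQ_p \cong \QQ_p \times \QQ_p$ with $\iota$ the diagonal, and the cosets refine independently at $\fp_1$ and $\fp_2$, yielding four contributions indexed by $(j_1, j_2) \in \{1, 2\}^2$; in the inert case $F \otimes \QQ_p$ is a field and the two embeddings are permuted by Frobenius, forcing a degenerate combinatorics in which $\iota^{-1}(\pr_{\fp, 2})$ pulls back to a ``Frobenius-twisted'' Hecke operator acting like $S_p \cdot \mathrm{Frob}$, responsible for the $X^2$ appearing in the final factor.

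Third, I would evaluate the resulting expression on the $\Pi^\vee$-isotypic part, where the $\GL_2/\QQ$-side Hecke operators act via the eigenvalues of $\Pi$. Expanding $\prod_{\fp \mid p}(1 - \alpha_{\fp}^{-1} [\pr_{\fp, 2}])$ using the split- or inert-case identification of $\Phi_{p, \alpha}$ and combining with the norm-compatibility of the Eisenstein classes should yield precisely $\cR(p^{-(1+h)})$. The main obstacle will be the combination of coset bookkeeping and normalization tracking: the shift $-(1+h)$ in the argument of $\cR$ comes from the Galois twist in $M_E(\Pi)^*(-h)$ together with the $p$-power renormalization distinguishing $U_{\fp}$ from $\cU_{\fp}$, and every sign and power of $p$ must be verified. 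The most delicate point is the inert case, where the appearance of $(1 - \alpha_p \beta_p X^2)$ rather than a pair of linear factors must be extracted from the combinatorics of a single quadratic extension; ultimately it reflects the fact that an inert Frobenius acts on the tensor-induced Asai representation through its square, so the ``missing'' linear factor gets absorbed into a quadratic one.
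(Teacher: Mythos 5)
Your inert-case plan is essentially the paper's: the authors prove the inert case directly from the Hecke-action formulae on Asai--Flach classes in \cite{LLZ18} (Theorem 7.1.2(a) and Corollary 7.4.2), which is the explicit operator calculation you outline. That part would go through.

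The split case, however, has a genuine gap that your coset-intersection strategy does not address. You propose to pull the single-prime degeneracy operators $\pr_{\fp_i, 2}$ back along $\iota : G^* \into G$ and compute with the four contributions indexed by $(j_1, j_2)$. But the coset representative $\stbt{1}{0}{0}{\varpi_{\fp_i}}$ underlying $\pr_{\fp_i, 2}$ has determinant a uniformizer at $\fp_i$ and a unit at $\fp_{3-i}$; such an element does not lie in $\iota(G^*(\Af))$ unless $\fp_i$ is principal in the narrow sense, so its Hecke correspondence on $Y_G$ does not descend to (or interact cleanly with) the embedded cycle $Y_{G^*} \into Y_G$. Concretely, the single-prime degeneracy map permutes geometric connected components of $Y_G$ according to the narrow class of $\fp_i$, and so moves the image of $Y_{G^*}$ off itself whenever $\fp_i$ is not narrow-principal. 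Only the product $\pr_{\fp_1,2}\pr_{\fp_2,2}$, corresponding to the rational prime $p$, is guaranteed to respect the $G^*$-cycle. The paper acknowledges exactly this: a direct computation works only under the extra hypothesis that the $\fp_i$ are trivial in the narrow class group, and in general it replaces the geometric coset calculation by a local multiplicity-one argument. Specifically, it packages the Asai--Flach classes into a $\GL_2(\Qp)$-equivariant functional $\mathrm{Z}^{\mathrm{mot}, j}$ on $\cW(\Pi_p)\times\cS(\Qp^2)$, invokes the Harris--Scholl one-dimensionality theorem to identify this functional, up to scalar, with the local Rankin--Selberg zeta integral, and then reads off the Euler factor $\cR(p^{-1-h})$ from an elementary computation of that integral at the relevant test data. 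Your proposal would need to either add the narrow-class-group restriction (and thus prove a weaker theorem) or be replaced by an argument of this representation-theoretic type.

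A smaller point: in the inert case your heuristic about a ``Frobenius-twisted'' operator producing the quadratic factor $(1 - \alpha_p\beta_p X^2)$ is in the right spirit, but the mechanism in the paper is more prosaic — it comes out of the formulae for $\Pr_{p,1,*}$ and $\Pr_{p,2,*}$ applied to $\AF^{[k_1,k_2,j]}_{\et,\fN p}$ together with the identity $\cS_p^{-1} = p^{-2}\alpha\beta$ on $M_E(\Pi)^*$, with no separate tensor-induction analysis required.
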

  
  Since this requires rather different methods than the $p$-adic interpolation calculations making up the majority of this paper, we shall defer its proof to \cref{sect:pstab} below.
  
\section{Families of Asai--Flach elements}

 \subsection{The groups $G^*$ and $G$}
  
  \begin{definition} \label{def:Gstar}
   Let $G^*$ be the group $G \times_{(\Res^F_\QQ \GG_m)} \GG_m$, and $Y_1^*(\fN p^n)$ the Shimura variety for $G^*$ of level $U_1^*(\fN p^n) \coloneqq U_1(\fN p^n) \cap G^*$.
  \end{definition}
  
  \begin{proposition}
   For each $n \ge 1$ there is a canonical map of $\QQ(\zeta_{p^n})$-varieties
   \[ 
    \jmath_n : Y_1^*(\fN p^n) \times_{\QQ} \QQ(\zeta_{p^n}) \to Y_G(K_n) \times_{\QQ} \QQ(\zeta_{p^n}). 
   \]
  \end{proposition}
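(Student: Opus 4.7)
The plan is to construct $\jmath_n$ via the adelic description of complex points, combined with an application of Shimura's reciprocity law to pin down the field of definition.

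First, I would introduce an auxiliary compact open subgroup of $G^*(\Af)$:
\[
 U_1^{*,\circ}(\fN p^n) \coloneqq \{ g \in U_1^*(\fN p^n) : (\det g)_p \equiv 1 \pmod{p^n} \},
\]
and verify the key inclusion $U_1^{*,\circ}(\fN p^n) \subseteq K_n$. Away from $p$ both groups restrict to $U_1(\fN)^{(p)} \cap G^*(\Af^{(p)})$, so the only substantive check is at $p$: any $g \in U_1(p^n)_p \cap G^*(\QQ_p)$ has the shape $g \equiv \stbt{\det g}{*}{0}{1} \pmod{p^n}$, and imposing $(\det g)_p \equiv 1 \pmod{p^n}$ forces $g \equiv \stbt{1}{*}{0}{1} \pmod{p^n}$, which lies in $K_{n,p}$ via the choice $u = 1 \in \cE_{K^{(p)}}$. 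This inclusion, together with $G^* \hookrightarrow G$, induces a natural morphism of $\QQ$-varieties
\[
 Y_{G^*}(U_1^{*,\circ}(\fN p^n)) \longrightarrow Y_G(K_n),
\]
given on complex points by $[g, z] \mapsto [g, z]$.

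Next, I would identify the intermediate Shimura variety as a cyclotomic base change of $Y_1^*(\fN p^n)$. The quotient $U_1^*(\fN p^n)/U_1^{*,\circ}(\fN p^n) \cong (\ZZ/p^n\ZZ)^\times$ via the $p$-component of the determinant makes the natural projection $Y_{G^*}(U_1^{*,\circ}(\fN p^n)) \to Y_1^*(\fN p^n)$ into a $(\ZZ/p^n)^\times$-torsor. The determinant map gives another morphism of Shimura varieties $Y_{G^*}(U_1^{*,\circ}(\fN p^n)) \to Y_{\GG_m}(1 + p^n\hat{\ZZ})$, whose target is canonically identified with $\Spec \QQ(\zeta_{p^n})$ by class field theory. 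Shimura's reciprocity law ensures these two structures are compatible: the torsor is pulled back from the cyclotomic torsor $\Spec \QQ(\zeta_{p^n}) \to \Spec \QQ$, yielding a canonical isomorphism
\[
 Y_{G^*}(U_1^{*,\circ}(\fN p^n)) \cong Y_1^*(\fN p^n) \times_\QQ \QQ(\zeta_{p^n})
\]
of $\QQ(\zeta_{p^n})$-varieties. Composing this isomorphism with the morphism of the previous paragraph and base-changing the target to $\QQ(\zeta_{p^n})$ produces the desired map $\jmath_n$.

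The main obstacle I foresee is the Shimura reciprocity identification. Although this is a standard output of Deligne's theory of canonical models, making it explicit requires tracking the Weil pairing on the universal abelian variety over $Y_1^*(\fN p^n)$, since this provides the canonical $\mu_{p^n}$-trivialization underlying the $\QQ(\zeta_{p^n})$-structure via the cyclotomic reciprocity map. Once this identification is in hand, the remaining steps are formal.
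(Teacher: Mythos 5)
Your proposal is correct and follows essentially the same route as the paper: identify the base change $Y_1^*(\fN p^n) \times_\QQ \QQ(\zeta_{p^n})$ with the $G^*$-Shimura variety of the smaller level $U_1^{*,\circ}(\fN p^n)$ (the paper's $U^*_{11}(\fN p^n)$, attributed there to \cite{LLZ18} and ultimately to Shimura reciprocity), then exploit the inclusion of that level subgroup into $K_n$. The only packaging difference is that the paper also introduces the auxiliary group $\widetilde{G} = G \times \GG_m$ so as to realize $Y_G(K_n) \times_\QQ \QQ(\zeta_{p^n})$ itself as a $\widetilde{G}$-Shimura variety of level $K_n \times V_n$ and then invoke functoriality of canonical models directly, whereas you first produce the $\QQ$-morphism $Y_{G^*}(U_1^{*,\circ}(\fN p^n)) \to Y_G(K_n)$ from the inclusion $U_1^{*,\circ}(\fN p^n) \subseteq K_n$ and obtain $\jmath_n$ by base-changing the target via the universal property of the fibre product; both give the same canonical map.
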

  
  \begin{proof}
   We consider the auxiliary group $\widetilde{G} \coloneqq G \times \GG_m$, and the map $\tilde\jmath = (\jmath, \det) : G^* \hookrightarrow \widetilde{G}$, where $\jmath$ is the inclusion $G^* \hookrightarrow G$.
   
   As in \cite{LLZ18}, we can identify $Y^*_1(\fN p^n) \times_{\QQ} \QQ(\zeta_{p^n})$ with the Shimura variety for $G^*$ of level
   \[ U^*_{11}(\fN p^n) \coloneqq \{ g \in U_1(\fN p^n) : \det g = 1 \bmod p^n\}. \]
   Likewise, we may clearly identify $Y_G(K_n) \times_{\QQ} \QQ(\zeta_{p^n})$ with the Shimura variety for $\widetilde{G}$ of level $K_n \times V_n$, where $V_n = (1 + p^n \Zp)^\times$. Since we have $\tilde{\jmath}\left(U^*_{11}(\fN p^n)\right) \subseteq K_n \times V_n$, the required homomorphism follows from the functoriality of canonical models of Shimura varieties.
  \end{proof}
  
  Analogously, for level $n = 0$, we write $U_1^*(\fN (p))$ for $U_1^*(\fN) \cap \Iw_p$, and we have maps (of $\QQ$-varieties)
  \[ \jmath_0 : Y_1^*(\fN(p)) \to Y_G(K_0), \qquad \jmath_{\sph} : Y_1^*(\fN) \to Y_G(K_{\sph}). \]
  
 \subsection{Iwasawa-theoretic classes}
   
  Theorem 8.2.3 of \cite{LLZ18} gives a canonical ``Asai--Flach'' class
  \[ {}_c \mathcal{AF}_{1, \fN p, a} \in 
   H^1_{\Iw}\Big(\QQ(\mu_{p^\infty}), 
    e'_\ord H^2_{\Iw}\big(Y_1^*(\fN p^\infty)_{\Qbar}, \cO\big)(2)\Big).\]
  
  \begin{remark}
   The class ${}_c \mathcal{AF}_{1, \fN p, a}$ is defined in \emph{op.cit.}~in the cohomology of an integral model of $Y_1^*(\fN p)$ over $\ZZ[1/\Sigma]$, with values in a sheaf of Iwasawa modules $\Lambda_R(\sH_{\cA})(2)$. However, in fact it takes values in a direct summand $\Lambda_R(\sH_{\cA}\langle t_p\rangle)$ of this sheaf, by Remark 6.4.6 of \emph{op.cit.}; this sheaf is the inverse limit as $n \to \infty$ of the pushforwards of the constant sheaf $\underline{\cO}$ from $Y_1^*(\fN p^n)$ to $Y_1^*(\fN p)$, so the cohomology of this sheaf coincides with the inverse-limit cohomology used here. (Compare e.g.~\cite[\S 4]{KLZ17}.)
   
   The class of \emph{op.cit.}~lies in $H^3$ of this integral model, which is related to $\Gal(\QQ_{\Sigma}/\QQ)$-cohomology of the variety over $\Qbar$ via the Hochschild--Serre spectral sequence. However, since $H^0_{\Iw}(\QQ(\mu_{p^\infty}), -)$ vanishes for every Galois representation, this class is homologically trivial (its image under base-extension to $\Qbar$ is zero); so we may map it into $H^1_{\Iw}$ as above. 
   
   Finally, $a$ denotes a choice of basis of $\cO_F / \ZZ$; but there is a canonical choice, characterized by $\sigma_1(a) - \sigma_2(a) = \sqrt{\Delta_F}$, and we shall fix this choice and drop it from the notation.
  \end{remark}
  
  \begin{definition}
   Let
   \[ z_\infty \in H^1_{\Iw}\Big(\QQ(\mu_{p^\infty}), e'_\ord H^2_{\Iw}\big(Y_G(K_\infty)_{\Qbar}, \cO\big)(2)\Big) \]
   denote the image of ${}_c \mathcal{AF}_{1, \fN p}$ under $\jmath_{\infty, *} = (\jmath_{n,*})_{n \ge 1}$.
  \end{definition}

  \begin{remark}
   Since each $\jmath_n$ is only defined over $\QQ(\mu_{p^n})$, and does not descend to $\QQ$, the map
   \[ 
    \jmath_{\infty, *} : e'_\ord H^2_{\Iw}\big(Y_1^*(\fN p^\infty)_{\Qbar}, \cO\big)
    \to e'_\ord H^2_{\Iw}\big(Y_G(K_\infty)_{\Qbar}, \cO\big)
   \]
   is only Galois-equivariant up to a twist by a $\Lambda^\times$-valued character of $\Gal(\QQ(\mu_{p^\infty}) / \QQ)$, twisting the weight $\lambda$ specialization by $t_1 + t_2$.
  \end{remark}
  
 \subsection{Compatibility of moment maps}
 
  \begin{proposition} \label{mmcompatibility}
   If $\lambda^* = \lambda|_{G^*}$, then the following diagram commutes, for any $n \ge 1$:
   \[ 
    \begin{tikzcd}
     H^2_{\Iw}(Y_1^*(\fN p^\infty)_{\Qbar}, \cO) \arrow{r}{\jmath_{\infty, *}} \arrow[swap]{d}{\mom^{\lambda^*}_n} & 
     H^2_{\Iw}(Y_G(K_\infty)_{\Qbar}, \cO) \arrow{d}{\mom^{\lambda}_n} \\
     H^2_{\et}(Y_1^*(\fN p^n)_{\Qbar}, \sH^{[\lambda^*]}) \arrow{r}{\jmath_{n, *}}& H^2_{\et}(Y_G(K_n)_{\Qbar}, \sH^{[\lambda]}) 
    \end{tikzcd}
   \]
  \end{proposition}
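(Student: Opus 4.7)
The plan is to reduce the commutativity to the projection formula for $\jmath_n$ combined with the naturality of the highest-weight sections under the closed embedding $G^* \hookrightarrow G$. First, I would verify the compatibility of coefficient sheaves: since $\lambda^* = \lambda|_{G^*}$ and $G^*$ contains the derived group of $G$, the restriction $V_\lambda|_{G^*}$ is canonically isomorphic to $V_{\lambda^*}$; this yields an identification $\jmath_n^* \sH^{[\lambda]}_\cO \cong \sH^{[\lambda^*]}_\cO$ under which the highest-weight section $v_\lambda^\vee$ of $\sH^{[\lambda]}_\cO$ over $Y_G(K_\infty)$ pulls back to the highest-weight section $v_{\lambda^*}^\vee$ of $\sH^{[\lambda^*]}_\cO$ over $Y_1^*(\fN p^\infty)$.

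Next, I would reformulate the moment map $\mom^\lambda_n$ as the composition
\[
H^i_{\Iw}\bigl(Y_G(K_\infty)_{\Qbar}, \cO\bigr) \;\cong\; H^i\bigl(Y_G(K_n)_{\Qbar}, \pi^G_{n,*}\cO\bigr) \;\xrightarrow{\;v_\lambda^\vee\;}\; H^i\bigl(Y_G(K_n)_{\Qbar}, \sH^{[\lambda]}_\cO\bigr),
\]
where $\pi^G_n : Y_G(K_\infty) \to Y_G(K_n)$ is the natural projection and the second arrow is induced by the section $v_\lambda^\vee$; analogously for $\mom^{\lambda^*}_n$ with $\pi^{G^*}_n : Y_1^*(\fN p^\infty) \to Y_1^*(\fN p^n)$. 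The essential geometric input is that the square
\[
\begin{tikzcd}
Y_1^*(\fN p^\infty) \arrow{r}{\jmath_\infty} \arrow{d}{\pi^{G^*}_n} & Y_G(K_\infty) \arrow{d}{\pi^G_n} \\
Y_1^*(\fN p^n) \arrow{r}{\jmath_n} & Y_G(K_n)
\end{tikzcd}
\]
is Cartesian after base change to $\QQ(\zeta_{p^n})$, a direct consequence of the defining identity $U_1^*(\fN p^m) = U_1(\fN p^m) \cap G^*$ for all $m$. This yields a base-change isomorphism $\jmath_n^* \pi^G_{n,*}\cO \cong \pi^{G^*}_{n,*}\cO$ intertwining the sheaf-level maps constructed above.

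The proposition then follows by the projection formula for $\jmath_n$: for any $\alpha \in H^i(Y_1^*(\fN p^n)_{\Qbar}, \cO)$ one has $\jmath_{n,*}\bigl(\alpha \cup \jmath_n^* v_\lambda^\vee\bigr) = \jmath_{n,*}(\alpha) \cup v_\lambda^\vee$, and specialising to $\alpha = \pi^{G^*}_{n,*}(z)$ for $z \in H^2_{\Iw}(Y_1^*(\fN p^\infty)_{\Qbar}, \cO)$, together with the identifications from the previous two steps, yields $\mom^\lambda_n(\jmath_{\infty,*}(z)) = \jmath_{n,*}(\mom^{\lambda^*}_n(z))$ as required. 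The hard part will be verifying the coherence of the tower of $\jmath_n$'s underlying the base-change isomorphism above, since each $\jmath_n$ is only defined after base change to $\QQ(\zeta_{p^n})$ and the towers over varying $n$ involve different base fields; however, since the diagram is only asserted to commute over $\Qbar$, the descent issue affects only the Galois action, producing exactly the twist by $t_1 + t_2$ mentioned in the remark preceding the proposition.
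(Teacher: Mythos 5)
Your core idea — compatibility of highest‐weight sections under $\jmath^*$ plus the projection (push–pull) formula — is exactly the paper's argument, and your sheaf‐theoretic identification $\jmath_n^*\sH^{[\lambda]}_\cO \cong \sH^{[\lambda^*]}_\cO$ with matching highest‐weight sections is the key input the paper also uses.

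However, the detour through the asserted Cartesian square and the resulting base‐change isomorphism $\jmath_n^*\pi^G_{n,*}\cO \cong \pi^{G^*}_{n,*}\cO$ is both unnecessary and questionable. It is not at all clear that this square is Cartesian: the pro‐étale towers $Y_G(K_\infty)/Y_G(K_n)$ and $Y_1^*(\fN p^\infty)/Y_1^*(\fN p^n)$ have Galois groups cut out by different tori (for $G$ it is $T(\Zp)/\overline{\cE}_{K^{(p)}}$, of dimension $d+1+\delta_F$, while for $G^*$ it is smaller), so the pullback of the $G$‐Iwasawa sheaf along $\jmath_n$ is in general strictly larger than the $G^*$‐Iwasawa sheaf, and the claimed base‐change isomorphism would fail. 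The paper sidesteps this entirely by never working with the infinite‐level push‐forward sheaf as a single object: it unwinds the moment map into its defining limit over finite levels $m$ with $\cO/p^m$ coefficients, applies the projection formula $\jmath_{m,*}(y_m\cup\jmath_m^*b^\lambda_m)=\jmath_{m,*}(y_m)\cup b^\lambda_m$ at each finite level, and then uses only the \emph{commutativity} (not Cartesian‐ness) of the level‐$m$‐to‐level‐$n$ squares with the $\jmath$'s, which is automatic from functoriality of pushforward. If you replace your base‐change step by this level‐by‐level argument, your proof becomes the paper's. Your closing remark about the $\QQ(\zeta_{p^n})$ descent issue and the resulting Galois twist is correct but orthogonal to the proposition as stated, since it concerns cohomology over $\Qbar$; the twist is addressed separately in the remark following the definition of $z_\infty$.
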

 
  \begin{proof}
   For each $m \ge n$, let $b^\lambda_m \in H^0(Y_G(K_m), \sH^{[\lambda]}_{\cO/p^m})$ denote the mod $p^m$ reduction of the highest-weight vector, and similarly $b^{\lambda^*}$ for $G^*$. Then we have the compatibility
   \(
     b^{\lambda^*}_m = \jmath_m^*(b^\lambda_m),
   \)
   so by the push-pull formula in \'etale cohomology, for any $y_m \in H^2_{\et}(Y_1^*(\fN p^m)_{\Qbar}, \cO/p^m)$ we have
   \begin{align*}
    \jmath_{m,*}(y_m \cup b^{\lambda^*}_m) &= \jmath_{m, *}(y_m \cup \jmath_m^*(b^{\lambda}_m)) \\
    &= \jmath_{m,*}(y_m) \cup b^{\lambda}_m.
   \end{align*}
   Since the moment map $\mom_n^{\lambda^*}$ sends $(y_m)_{m \ge 1}$ to the limit of the sequence 
   \[ 
    \left(\operatorname{norm}^{\fN p^m}_{\fN p^n}(y_m \cup b^{\lambda^*}_m)\right)_{m \ge n} \in \varprojlim_{m \ge n} H^2_{\et}\left(Y_1^*(\fN p^n)_{\Qbar}, \sH^{[\lambda]}_{\cO/p^m}\right) =  H^2_{\et}\left(Y_1^*(\fN p^n)_{\Qbar}, \sH^{[\lambda]}_{\cO}\right),
   \]
   and similarly for $G$, the result follows.
  \end{proof}
 
  \begin{theorem}[Theorem B] \label{b2}
   \label{prop:interp1}
   Let $\Pi$ be a cuspidal automorphic representation of $G$ of weight $\lambda$ and level $\fN$, unramified and ordinary at $p$. Then the map of Galois representations
   \begin{align*}
    e'_\ord H^2_{\et}\big(Y_G(K_\infty)_{\Qbar}, \cO\big)(2) \otimes_{\Lambda} E[\lambda]
    \xrightarrow{\mom^\lambda_0}&\ 
    e'_\ord H^2_{\et}\big(Y_G(K_0)_{\Qbar}, \sH^{[\lambda]}\big)(2)\\
    \twoheadrightarrow&\ M_E(\Pi, \alpha)^*
   \end{align*}
   sends $z_\infty$ to $\operatorname{tw}_{(t_1 + t_2)}\left({}_c\AF^{\cF_{\alpha}}_1\right)$, where: $\cF_{\alpha}$ is the ordinary $p$-refinement of the newform generating $\Pi$; ${}_c\AF^{\cF_{\alpha}}_{1}$ is the Iwasawa cohomology class associated to this $p$-stabilized eigenform as in Theorem 9.1.2 of \cite{LLZ18}; and $\operatorname{tw}_{(t_1+t_2)}$ denotes the canonical isomorphism between the Iwasawa cohomology of $M_E(\cF_\alpha)^*$ and its Tate twist $M_E(\cF_\alpha)^*(t_1 + t_2) = M_E(\Pi, \alpha)^*$. 
  \end{theorem}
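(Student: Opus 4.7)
The plan is to unwind the definition of $z_\infty$ and then apply the compatibility of moment maps proved in \cref{mmcompatibility}, followed by a direct comparison with the Asai--Flach construction of \cite{LLZ18}. By definition $z_\infty = \jmath_{\infty, *}\bigl({}_c \mathcal{AF}_{1, \fN p}\bigr)$, and the universal Iwasawa-theoretic class ${}_c \mathcal{AF}_{1, \fN p}$ lives in Iwasawa cohomology of $Y_1^*(\fN p^\infty)$. So the composition in the statement applied to $z_\infty$ is, after unfolding, the image of $\mom^{\lambda^*}_n\bigl({}_c \mathcal{AF}_{1, \fN p}\bigr)$ under $\jmath_{n, *}$, for any $n \ge 1$, followed by corestriction down to $\QQ$ and projection to the $(\Pi, \alpha)$-isotypic quotient.

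First I would show that the moment map at weight $\lambda^*$ sends the universal Iwasawa class ${}_c \mathcal{AF}_{1, \fN p}$ to (a natural realization of) the Iwasawa cohomology class ${}_c \AF^{\cF_{\alpha}}_{1}$ of \cite[Theorem 9.1.2]{LLZ18}. This is essentially built into the construction in \emph{op.cit.}: the class ${}_c \AF^{\cF_{\alpha}}_{1}$ is obtained from ${}_c \mathcal{AF}_{1, \fN p}$ by pushing forward along the moment map attached to the highest weight vector of $V_{\lambda^*}^\vee$ inside the sheaf of Iwasawa modules, precisely as recalled in the remark following the definition of $z_\infty$. Combined with \cref{mmcompatibility}, this shows that $\mom^{\lambda}_n \bigl(\jmath_{\infty, *}({}_c\mathcal{AF}_{1, \fN p})\bigr) = \jmath_{n, *}\bigl({}_c \AF^{\cF_{\alpha}}_{1}\bigr)$ in $H^1_{\Iw}(\QQ(\mu_{p^n}), e'_\ord H^2_{\et}(Y_G(K_n)_{\Qbar}, \sH^{[\lambda]})(2))$.

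Next I would descend back to $\QQ$ and project to the $(\Pi, \alpha)$-eigenspace. Since $\jmath_n$ is only defined over $\QQ(\mu_{p^n})$, the pushforward $\jmath_{n, *}$ is Galois-equivariant only up to a $\Lambda^\times$-valued character twisting the weight $\lambda$ specialization by $t_1 + t_2$; this is exactly the twist $\operatorname{tw}_{(t_1+t_2)}$ recorded in the statement, and it matches the convention that $M_E(\Pi, \alpha)^* = M_E(\cF_\alpha)^*(t_1 + t_2)$. Thus after projecting, $\jmath_{n, *}\bigl({}_c\AF^{\cF_\alpha}_1\bigr)$ lands in $H^1_{\Iw}\bigl(\QQ(\mu_{p^\infty}), M_E(\Pi, \alpha)^*\bigr)$ and coincides with $\operatorname{tw}_{(t_1+t_2)}\bigl({}_c \AF^{\cF_\alpha}_1\bigr)$; this is essentially the content of the definition of the class ${}_c \AF^{\cF_\alpha}_1$ in \cite{LLZ18}, which is obtained by pushing forward from $Y_1^*(\fN p^\infty)$ to $Y_G(K_\infty)$ and then taking a $(\Pi, \alpha)$-projection.

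The main technical obstacle I anticipate is bookkeeping the twist by $t_1 + t_2$ correctly. The maps $\jmath_n$ live in a tower indexed by $n$, each defined over a different cyclotomic field, and the $\fS$-action is conjugated by the Weyl element $w_0$; one has to check that the resulting character of $\Gal(\QQ(\mu_{p^\infty})/\QQ)$ arising from the compatibility between $\jmath_{n,*}$ and Galois descent is precisely the cyclotomic character to the power $t_1+t_2$, matching the Tate twist present in the definition of $M_E(\Pi, \alpha)^* = M_E(\cF_\alpha)^*(t_1+t_2)$. Once this is in place, the remaining identifications are routine applications of functoriality of the moment map and of the $(\Pi, \alpha)$-projection, together with the compatibility diagram of \cref{mmcompatibility}.
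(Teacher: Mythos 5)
Your proposal is correct and follows essentially the same route as the paper's (one-sentence) proof: unwind $z_\infty = \jmath_{\infty,*}({}_c\mathcal{AF}_{1,\fN p})$, apply the commuting diagram of \cref{mmcompatibility}, and use that ${}_c\AF^{\cF_\alpha}_1$ is by definition the image of ${}_c\mathcal{AF}_{1,\fN p}$ under the $G^*$ moment map followed by the $\cF_\alpha$-projection. One small imprecision: there is no ``corestriction down to $\QQ$'' step — both $z_\infty$ and ${}_c\AF^{\cF_\alpha}_1$ already live in $H^1_{\Iw}(\QQ(\mu_{p^\infty}),-)$, and the passage from level $n$ to level $0$ is the trace along the tower built into $\mom^\lambda_0$, not a Galois corestriction; your discussion of the twist $\operatorname{tw}_{(t_1+t_2)}$ arising from the $\QQ(\mu_{p^n})$-rationality of $\jmath_n$ matches the remark following the definition of $z_\infty$ and is the right bookkeeping.
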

  
  \begin{proof}
   This follows from the compatibility of moment maps in the previous proposition and the definition of the classes ${}_c\AF^{\cF_{\alpha}}_1$, which are constructed as the images of ${}_c \mathcal{AF}_{1, \fN p}$ under the $G^*$ moment maps.
  \end{proof}
%
 
 \subsection{Asai--Flach classes for families}
  
  Let us now fix an ordinary family $\underline{\Pi}$ over some open $\mathcal{C}$, as above.
  
  \begin{definition}
   We let  
   \[ {}_c \mathcal{AF}(\uPi) \in H^1_{\Iw}(\QQ(\mu_{p^\infty}), M(\uPi)^*) \]
   be the image of $z_\infty$.
  \end{definition}
  
  By the preceding proposition, the specialization of ${}_c \mathcal{AF}(\uPi)$ at any $\lambda \in 
  \mathcal{C}$ is the class $\operatorname{tw}_{(t_1 + t_2)}\left({}_c\AF^{\cF_{\alpha}}_1\right)$, for $\cF_\alpha$ the ordinary eigenform generating $\Pi[\lambda]$; and likewise for ${}_c \mathcal{AF}^{\ad}(\uPi)$.
  
 We now show that these interpolate the ``finite-level'' Asai--Flach classes associated to suitable twists of Hilbert eigenforms (as defined in \cite[Definition 4.4.6]{LLZ18}). We can interpret the Iwasawa cohomology as a module over $\Lambda \mathop{\hat\otimes} \Lambda^{\mathrm{cy}}$, where $\Lambda^{\mathrm{cy}} = \cO[[\Zp^\times]]$ is the cyclotomic Iwasawa algebra, whose rigid-analytic spectrum $\cW^{\mathrm{cy}}$ parametrizes characters of $\Zp^\times$.
  
  \begin{theorem} \label{int1}
   Let $\lambda = (k_1, k_2; t_1, t_2)$, and suppose $0 \le j \le \min(k_1, k_2)$. As before write and $h = t_1 + t_2 + j$. Then evaluation at $(\lambda, h) \in \mathcal{C}\times \cW^{\mathrm{cy}}$ maps ${}_c \mathcal{AF}(\uPi)$ to the class
   \[
    \frac{(c^2 - c^{2(h-w)}\omega_{\Pi}(c))\left(1 - \frac{p^{h}}{\alpha_p} \right)}{\sqrt{\Delta_F}^j (-1)^j j! \binom{k}{j} \binom{k'}{j}}  \AF^{[\Pi, \alpha, j]}_{\et} \in H^1(\QQ, M_E(\Pi, \alpha)^*(-h)). 
   \]
  \end{theorem}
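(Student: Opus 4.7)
The proof will proceed by decomposing the evaluation map at $(\lambda, h) \in \mathcal{C} \times \cW^{\mathrm{cy}}$ into two independent moment maps: the Hilbert-weight moment map $\mom^\lambda_0$ on the coefficient system for $G$, and the cyclotomic moment map on the Iwasawa algebra $\Lambda^{\mathrm{cy}} = \cO[[\Zp^\times]]$. Since these two operations commute (they act on different factors of the $\Lambda \mathop{\hat\otimes} \Lambda^{\mathrm{cy}}$-module structure), we are free to apply them in whichever order is more convenient, and I would apply the Hilbert moment map first.

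After applying $\mom^\lambda_0$ and projecting to the $\Pi$-isotypic direct summand $M_E(\Pi, \alpha)^*$, Theorem \ref{b2} identifies the image of $z_\infty$ with $\operatorname{tw}_{(t_1+t_2)}({}_c\AF^{\cF_\alpha}_1)$, where ${}_c\AF^{\cF_\alpha}_1$ is the Iwasawa-theoretic Asai--Flach class of \cite[Theorem 9.1.2]{LLZ18} associated to the ordinary $p$-stabilized eigenform $\cF_\alpha$. It therefore remains to show that specialisation at the cyclotomic character corresponding to $h = t_1 + t_2 + j$ sends this Iwasawa class to the prescribed scalar multiple of the finite-level class $\AF^{[\Pi, \alpha, j]}_{\et}$.

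For the cyclotomic specialisation, I would invoke the explicit interpolation formula that is built into the construction of ${}_c\AF^{\cF_\alpha}_1$ in \cite{LLZ18}: this class is defined precisely so that its moment at cyclotomic weight $j$ recovers $\AF^{[\Pi, \alpha, j]}_{\et}$ up to an explicit scalar correction, by means of a moment-map computation on étale Eisenstein classes (as in \cite[\S 6]{LLZ18}). The three scalar factors can then be accounted for in turn: (i) the binomial/factorial factor $\sqrt{\Delta_F}^j (-1)^j j! \binom{k}{j}\binom{k'}{j}$ in the denominator comes from pairing the universal Eisenstein class with the highest-weight vector of $V_\lambda^\vee$ in the cyclotomic direction, which is a standard moment-map calculation on the Clebsch--Gordan piece used to construct the Asai--Flach class; (ii) the $c$-smoothing factor $(c^2 - c^{2(h-w)}\omega_\Pi(c))$ appears because the Iwasawa class is the $c$-smoothed version, and the central character specialisation of the smoothing operator yields precisely this polynomial in $c$ (as in the analogous Rankin--Selberg computation of \cite[Theorem 5.4.1]{KLZ17}); and (iii) the Euler-type factor $(1 - p^h/\alpha_p)$ arises from comparing the Iwasawa class, which carries an implicit ``level-$p^\infty$'' normalisation, with the finite-level class at Iwahori level, exactly as in \cite[\S 9]{LLZ18}.

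The main obstacle is the careful bookkeeping of all three scalar factors and of the Tate twist $\operatorname{tw}_{(t_1+t_2)}$, matching the conventions of \cite{LLZ18} (where the weight shift and the location of the cyclotomic twist in the Galois representation differ from our normalisation by the recurring factor $t_1+t_2$). In particular one must verify that the interaction between the twist $\operatorname{tw}_{(t_1+t_2)}$ and the cyclotomic specialisation at $h = t_1 + t_2 + j$ produces the correct exponent $h-w$ in the $c$-smoothing term, rather than some other shift; this follows from $w = k + 2t_1 = k' + 2t_2$ and a direct computation using the explicit formula for how the central character $\omega_\Pi$ interacts with the determinant character under $\operatorname{tw}_{(t_1+t_2)}$.
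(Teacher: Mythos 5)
Your proposal is correct and follows essentially the same route as the paper: the paper's proof of Theorem \ref{int1} is precisely the combination of Theorem \ref{b2} (to identify the weight-$\lambda$ specialisation of ${}_c\mathcal{AF}(\uPi)$ with $\operatorname{tw}_{(t_1+t_2)}({}_c\AF^{\cF_\alpha}_1)$) and the interpolation formula for ${}_c\mathcal{AF}$ from \cite[Theorem 9.1.2]{LLZ18} for the cyclotomic specialisation. Your itemised account of the three scalar factors is reasonable additional exposition, but all of it is already packaged into that cited theorem, so you are not doing anything genuinely different -- the ordering of the two moment maps and the bookkeeping of the twist $\operatorname{tw}_{(t_1+t_2)}$ are exactly the steps implicit in the paper's two-line proof.
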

  
  \begin{proof}
   This follows by combining \cref{prop:interp1} with the interpolation property of the classes ${}_c \mathcal{AF}$ given in \cite[Theorem 9.1.2]{LLZ18}.
  \end{proof}
  
  Identifying $M_E(\Pi, \alpha)^*$ with $M_E(\Pi)^*$, and using \cref{thm:pstab}, we can write this as follows:
  
  \begin{theorem}[Theorem C] \label{c2}
   With the above notations we have
   \[ \operatorname{sp}_{(\lambda, h)} \left({}_c \mathcal{AF}(\uPi)\right) = 
   \frac{(c^2 - c^{2(h-w)}\omega_{\Pi}(c))(1 - \tfrac{p^h}{\alpha_p}) \cR(p^{-1-h})}{\sqrt{\Delta_F}^j (-1)^j j! \binom{k}{j} \binom{k'}{j}}  \AF^{[\Pi, j]}_{\et},\]
   where $\cR$ is the Euler factor from \cref{thm:pstab}.\qed
  \end{theorem}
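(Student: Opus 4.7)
The plan is to observe that Theorem \ref{c2} is a direct consequence of Theorem \ref{int1} combined with the $p$-stabilization relation of Theorem \ref{thm:pstab}, modulo carefully tracking the identification $M_E(\Pi, \alpha)^* \xrightarrow{\sim} M_E(\Pi)^*$ implicit in the statement. Indeed, by \cref{cor:classicalinterp} the specialization of the family representation $M(\uPi)^*$ at a classical weight $\lambda$ is identified with $M_E(\Pi[\lambda])^*$ precisely via the composition $\operatorname{Pr}_{p,\alpha,*} \circ \mom^\lambda_0$. Consequently, the specialization map $\operatorname{sp}_{(\lambda, h)}$ appearing in Theorem \ref{c2} is, by construction, $\operatorname{Pr}_{p,\alpha,*}$ applied to the specialization valued in $M_E(\Pi, \alpha)^*(-h)$, which Theorem \ref{int1} already expresses as an explicit scalar multiple of the $p$-stabilized class $\AF^{[\Pi, \alpha, j]}_{\et}$.

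The second step is to invoke Theorem \ref{thm:pstab}, which gives
\[ \operatorname{Pr}_{p,\alpha,*}\bigl(\AF^{[\Pi, \alpha, j]}_{\et}\bigr) = \cR(p^{-(1+h)}) \cdot \AF^{[\Pi, j]}_{\et}. \]
Multiplying the scalar produced by \cref{int1} by this Euler factor yields exactly the coefficient displayed in the statement of Theorem \ref{c2}, and the result follows formally. There is no residual computation to carry out beyond combining the two formulas.

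The only point I would expect to require genuine care, and which I regard as the main (mild) obstacle, is bookkeeping of the cyclotomic and arithmetic Tate twists along the chain $z_\infty \mapsto {}_c \mathcal{AF}(\uPi) \mapsto \operatorname{sp}_{(\lambda, h)}({}_c \mathcal{AF}(\uPi))$. Specifically, one must verify that the twist $\operatorname{tw}_{(t_1+t_2)}$ appearing in Theorem \ref{prop:interp1} -- which arises because $\jmath_{\infty,*}$ is only Galois-equivariant up to a $\Lambda^\times$-valued character -- is absorbed correctly into the interpolation formula of Theorem \ref{int1}, so that the cyclotomic exponent in the final statement is uniformly $h = t_1 + t_2 + j$. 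Once this is checked, the proof is immediate and the \textsc{qed} in the statement is justified.
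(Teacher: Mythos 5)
Your proposal is correct and follows essentially the same route as the paper: Theorem~\ref{c2} is obtained from Theorem~\ref{int1} by identifying $M_E(\Pi, \alpha)^*$ with $M_E(\Pi)^*$ via $\Pr_{p,\alpha,*}$ and then invoking the $p$-stabilization relation of Theorem~\ref{thm:pstab} to convert $\AF^{[\Pi,\alpha,j]}_{\et}$ into $\cR(p^{-1-h})\cdot\AF^{[\Pi,j]}_{\et}$. The Tate-twist bookkeeping you flag is already absorbed into the statement of Theorem~\ref{int1}, so there is indeed no residual computation and the \textsc{qed} is justified.
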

  
  \begin{remark}
   Note that there is some ``redundancy'' here, since twisting $\Pi$ by powers of the norm character corresponds to a cyclotomic twist of the Galois representation, while Iwasawa cohomology over $\QQ(\mu_{p^\infty})$ also interpolates cyclotomic twists. Concretely, we get the same cohomology class if we specialize at $(\lambda, h)$ or at $(\lambda + (0, 0, 1, 1), h + 2)$. This redundancy can be eliminated by restricting $\lambda$ to lie in the codimension 1 subspace of $\cW$ parametrizing characters with a fixed value of $w$ (so we are varying $\Pi$ in a family while keeping the $\infty$-type of its central character fixed). The case $w = 0$ corresponds to the theory of \cite{She25}.
   
  \end{remark}
\section{Proof of the $p$-stabilization relation}
 \label{sect:pstab}

 \subsection{Inert primes} 
 
  For $p$ inert in $F$, we can give a direct proof of \cref{thm:pstab} using the formulae for Hecke actions on Asai--Flach classes from \cite{LLZ18}.

  \begin{proof}[Proof of Theorem \ref{thm:pstab} for $p$ inert]
   Recall that $\Pr_{p, \alpha} = \Pr_{p, 1} - \tfrac{1}{\alpha} \Pr_{p, 2}$; and $\AF^{[\Pi, \alpha, j]}$ is the projection to the $\cF_\alpha$-eigenspace of the class $\AF^{[k_1, k_2, j]}_{\et, \fN p}$ in the notation of \emph{op.cit.}, and likewise for $\AF^{[\Pi, j]}$.
   
   Theorem 7.1.2(a) of \emph{op.cit.} gives us the formula
   \[ \Pr_{p, 1, *}\left( \AF^{[k_1, k_2, j]}_{\et, \fN p} \right) = 
   \left(1 - p^{-2h} \cS_p^{-1} \right) \AF^{[k_1, k_2, j]}_{\et, \fN}. \]
   For $\Pr_{p, 1, *}$, we can apply Corollary 7.4.2 of \emph{op.cit.}, but with a slight correction since our conventions for $\Pr_{p, 2, *}$ are not quite the same as the $(\pr_{2, p})_*$ of \emph{op.cit.}; with our conventions the formula becomes 
   \[ \Pr_{p, 2, *}\left( \AF^{[k_1, k_2, j]}_{\et, \fN p} \right) =
    p^{1-h} \cS_p^{-1} \left(1 - p^{-2h} \cS_p^{-1} \right) \AF^{[k_1, k_2, j]}_{\et, \fN}. 
   \]
   Combining these, and noting that $\cS_p^{-1}$ acts as $p^{-2}\alpha\beta$ on $M(\Pi)^*$, we see that the projection of $\Pr_{p, \alpha,*}\left(\AF^{[k_1, k_2, j]}_{\et, \fN p}\right)$ to $M(\Pi)^*$ is given by the expected formula
   \[ \left(1 - p^{-1-h}\beta\right) \left(1 - p^{-2h-2} \alpha \beta \right) \AF^{[\Pi, j]}_{\et}.\qedhere\]
  \end{proof}

 \subsection{Proof for $p$ split}

  In the case of split primes, the theorem is substantially more difficult. If the primes above $p$ are trivial in the narrow class group (so the degeneracy maps involved in the theorem preserve the image of $Y_{G^*}$ in $Y_G$), then one can give a proof similar to the above. However, for primes $p$ not satisfying this assumption different methods are required, so we shall instead use the representation-theoretic techniques based on multiplicity-one theorems developed in \cite{LSZ21}.
  
  \begin{notation} \
   \begin{itemize}
   \item Let $\cW(\Pi_p)$ denote the Whittaker model of $\Pi_p$, with respect to some choice of unramified additive character of $F \otimes \Qp$ trivial on $\Qp$.
   \item Let $W^{\sph} \in \cW(\Pi_p)$ be the normalized spherical vector, and 
   $W_\alpha = \Pr_{p, \alpha} \cdot W^{\sph}$ the normalized $U_p$-eigenvector in $\cW(\Pi_p)^{\Iw_p}$.
   \item Let $\cS(\Qp^2)$ be the Schwartz space of locally constant, compactly supported functions on $\Qp^2$.\qedhere
   \end{itemize}
  \end{notation}

  \begin{proposition}
   There exists a map
   \[ \mathrm{Z}^{\mathrm{mot}, j} : \cW(\Pi_p) \times \cS(\Qp^2) \to H^1(\QQ, M_E(\Pi)^*(-h)) \]
   which transforms under $g \in \GL_2(\Qp)$ by
   \[ \mathrm{Z}^{\mathrm{mot}, j}(gW, g\Phi) = |\det g|^{-h} \mathrm{Z}^{\mathrm{mot}, j}(W, \Phi).\]
   Morever, we have
   \[ \mathrm{Z}^{\mathrm{mot}, j}(W^{\sph}, \ch(\Zp^2)) = \AF^{[\Pi, j]}_{\et}, \]
   whereas
   \[ \mathrm{Z}^{\mathrm{mot}, j}\Big(W_\alpha, \ch((0, 1) + p\Zp^2)\Big) = \tfrac{1}{(p^2 - 1)} \operatorname{Pr}_{p, \alpha, *} \left( \AF_{\et}^{[\Pi, \alpha, j]}\right). \]
  \end{proposition}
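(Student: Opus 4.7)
The plan is to follow the representation-theoretic framework developed in \cite{LSZ21}, in which Euler-system classes are packaged into ``motivic zeta maps'' characterized abstractly by multiplicity-one principles and then identified with concrete classes by test-vector computations.

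To construct $\mathrm{Z}^{\mathrm{mot}, j}$, I would start from finite-level Asai--Flach classes at level $U_1(\fN)^{(p)} K_f$ for varying compact open $K_f \subseteq \GL_2(\Qp)$: for each $\Phi \in \cS(\Qp^2)^{K_f}$, a variant of the construction of \cite[Definition 4.4.6]{LLZ18} produces a motivic class whose dependence on $\Phi$ is encoded through the choice of degeneracy maps used. Pairing this class against a $K_f$-fixed Whittaker vector $W \in \cW(\Pi_p)^{K_f}$, via Whittaker-model duality between the $\Pi_p$-vectors and the $\Pi_p^{\vee}$-isotypic quotient of the relevant cohomology, yields the desired element of $H^1(\QQ, M_E(\Pi)^*(-h))$. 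Independence from the choice of $K_f$ is the crucial consistency check: it follows from the norm-compatibility of Asai--Flach classes under shrinking of the $p$-part of the level, which is the Euler-system trace relation at $p$. The equivariance $\mathrm{Z}^{\mathrm{mot},j}(gW, g\Phi) = |\det g|^{-h}\mathrm{Z}^{\mathrm{mot},j}(W,\Phi)$ then follows immediately from the Hecke-equivariance of the construction, the factor $|\det g|^{-h}$ arising from the renormalization $U_p = p^h \cU_p$ of the coefficient sheaf.

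The two test-vector identities are the remaining content. The spherical case is essentially definitional: taking $K_f = \GL_2(\Zp)$, both $W^{\sph}$ and $\ch(\Zp^2)$ are $K_f$-fixed and the construction reduces to the original level-$\fN$ Asai--Flach class. For the Iwahori case, I would work at a level $K_f$ contained in $\Iw_p$ under which both $W_\alpha$ and $\ch((0,1)+p\Zp^2)$ are invariant, and then relate the resulting class to $\Pr_{p,\alpha,*}\bigl(\AF^{[\Pi,\alpha,j]}_{\et}\bigr)$. The main obstacle I anticipate is the normalization bookkeeping in this final step: the factor $\tfrac{1}{p^2-1}$ must emerge from the combination of the Iwahori index $[\GL_2(\Zp):\Iw_p] = p+1$, an additional factor of $p-1$ coming from the diagonal-torus scaling stabilizing the line through $(0,1) \bmod p$, and the precise convention for $\Pr_{p,\alpha} = \pr_{p,1} - \tfrac{1}{\alpha}\pr_{p,2}$; one must also verify that the specific coset $(0,1)+p\Zp^2$ (rather than another $\Iw_p$-orbit in $\Zp^2$) is what couples to $W_\alpha$ to single out the $\alpha$-eigenspace rather than its $\beta$-counterpart. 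This is the kind of careful but tractable calculation occupying the analogous arguments in \cite{LSZ21}.
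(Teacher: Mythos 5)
Your outline is essentially the approach the paper takes: the paper's own "proof" is a one-paragraph remark that the construction of $\mathrm{Z}^{\mathrm{mot},j}$ is bookkeeping of the choices in the Euler-system construction, deferring to \cite{LZ24} for the $\operatorname{GSp}_4$ analogue and to \cite{Gro20} for a Hecke-algebra formulation. Your description of how that bookkeeping goes (vary $K_f$, use trace-compatibility for well-definedness, then evaluate on test vectors) matches that framework, and your worry about the constant $\tfrac{1}{p^2-1}$ is well placed --- it is exactly the index $[\GL_2(\Zp) : \mathrm{Stab}((0,1)\bmod p)] = p^2-1$, i.e.\ the orbit size of the Schwartz test vector, which your $(p+1)(p-1)$ accounting recovers.
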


  \begin{proof}
   The construction of $\mathrm{Z}^{\mathrm{mot}, j}$ amounts to nothing more than carefully keeping track of all the choices made during the construction of the Euler system class; compare \cite{LZ24} for the $\operatorname{GSp}_4$ Euler system. See \cite{Gro20} for a similar result phrased in terms of Hecke algebras for $G$ (one can translate between that formulation and the one above using Frobenius reciprocity).
  \end{proof}
  
  \begin{lemma}
   The space of linear functionals on $\cW(\Pi_p) \times \cS(\Qp^2)$ with the $\GL_2(\Qp)$-equivariance property described above is 1-dimensional, and spanned by the Rankin--Selberg zeta integral
   \[ Z^{\mathrm{an}, j}(W, \Phi) \coloneqq
   \lim_{s \to h} \frac{1}{L^{\As}(\Pi_p, s)} \int_{N(\Qp) \backslash \GL_2(\Qp)}W(g) f^{\Phi}(g; s)\, \mathrm{d}g
   \]
   where $f^{\Phi}$ is a Godement--Siegel section (and $N$ the upper-triangular unipotent).
  \end{lemma}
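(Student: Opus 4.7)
\medskip

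\noindent\textbf{Strategy.} This is a local multiplicity-one statement for the Asai Rankin--Selberg integral. Since $p$ splits in $F$, we have $F \otimes \Qp \cong \Qp \times \Qp$, so $\Pi_p$ factors as $\pi_1 \otimes \pi_2$ for irreducible generic representations $\pi_i$ of $\GL_2(\Qp)$, and its Whittaker model $\cW(\Pi_p)$ decomposes as $\cW(\pi_1) \otimes \cW(\pi_2)$ with $\GL_2(\Qp)$ acting diagonally. The claim then reduces to classical uniqueness of the $\GL_2 \times \GL_2$ Rankin--Selberg functional.

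\medskip

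\noindent\textbf{Step 1: Godement--Siegel reinterpretation.} For a complex parameter $s$, the Godement--Siegel map $\Phi \mapsto f^\Phi(\,\cdot\,;s)$ sends $\cS(\Qp^2)$ into a principal series $I(s) \coloneqq \Ind_B^{\GL_2(\Qp)}\bigl(|\cdot|^{s-1/2}\omega_{\Pi_p}^{-1}, |\cdot|^{s-1/2}\bigr)$, where $\omega_{\Pi_p}$ is the product of central characters of $\pi_1, \pi_2$. Under this map, the right $\GL_2(\Qp)$-translation action on $\cS(\Qp^2)$, twisted by $|\det|^{-h}$ when $s = h$, matches the standard principal series action. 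Thus the problem is to compute the dimension of
\[
\Hom_{\GL_2(\Qp)}\bigl(\cW(\pi_1) \otimes \cW(\pi_2), \; I(h)^\vee\bigr),
\]
the diagonal-equivariance condition being exactly the condition to be an element of this Hom space.

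\medskip

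\noindent\textbf{Step 2: Existence.} The zeta integral converges absolutely for $\Re(s) \gg 0$ and admits meromorphic continuation in $s$; one checks directly, using invariance of the Haar measure on $N(\Qp) \backslash \GL_2(\Qp)$ and the transformation law $f^{g\Phi}(h;s) = |\det g|^{-s} f^\Phi(hg;s)$, that the raw integral satisfies the required $|\det g|^{-h}$-equivariance. Division by $L^{\As}(\Pi_p, s)$ removes all poles on the critical line (this is the defining property of the local Asai $L$-factor, via the work of Flicker, Kable, and Matringe), so $\lim_{s \to h}$ is a well-defined functional with the prescribed equivariance.

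\medskip

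\noindent\textbf{Step 3: Uniqueness.} This is the local multiplicity-one theorem for $\GL_2 \times \GL_2$ Rankin--Selberg, due to Jacquet--Piatetski-Shapiro--Shalika. Explicitly, by Frobenius reciprocity the Hom space in Step 1 is isomorphic to
\[
\Hom_{B(\Qp)}\bigl(\cW(\pi_1) \otimes \cW(\pi_2), \; \delta_B^{1/2} \cdot |\det|^{h-1/2} \omega_{\Pi_p}\bigr),
\]
and this is one-dimensional by the uniqueness of Whittaker models combined with a direct computation in the Kirillov model of $\pi_1 \otimes \pi_2$ restricted to the mirabolic, as in \cite{LSZ21}. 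Hence the space of admissible functionals has dimension at most one, and by Step 2 is exactly spanned by $Z^{\mathrm{an}, j}$.

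\medskip

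\noindent\textbf{Main obstacle.} The delicate point is Step 3 in the edge case where $s = h$ is a pole of the ratio of the integral and the $L$-factor for some choice of $(W, \Phi)$: one must verify that limits of admissible functionals remain admissible and that no accidental dimension jump occurs at $s = h$. In the ordinary, prime-to-$p$ conductor setting of this paper, $\Pi_p$ is an unramified principal series and the parameters are generic, so this pathology does not arise and the uniqueness is clean.
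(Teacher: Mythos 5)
Your overall strategy matches the paper's: factor $\Pi_p = \pi_1 \otimes \pi_2$ since $p$ splits, observe that a functional with the stated equivariance factors through a principal-series quotient $\sigma$ of $\cS(\Qp^2)$, and establish uniqueness by a multiplicity-one theorem, then check existence and nonvanishing of the zeta integral on spherical data. Where you diverge is in \emph{which} multiplicity-one theorem you invoke and how: you pass by Frobenius reciprocity to a $B(\Qp)$-equivariant Hom space and invoke a Kirillov-model/JPSS-style computation; the paper instead applies the trilinear-forms theorem of Harris--Scholl \cite{harrisscholl} directly to $\Hom_{\GL_2(\Qp)}\bigl(\pi_1 \otimes \pi_2 \otimes (\sigma|\cdot|^h), \CC\bigr)$. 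These are cousins of one another (Harris--Scholl's own proof is mirabolic/Kirillov-theoretic), but the paper's choice is deliberate because Harris--Scholl explicitly treat the case where one factor is a \emph{reducible} Whittaker-type principal series.

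This is where your argument has a real gap, in the ``Main obstacle'' paragraph. You correctly flag that a dimension jump at $s = h$ would break uniqueness, but you dismiss it on the grounds that ``$\Pi_p$ is an unramified principal series and the parameters are generic.'' That reasoning addresses the wrong factor: $\pi_1$ and $\pi_2$ are indeed irreducible, but the reducibility obstruction concerns the \emph{third} factor $\sigma$, the maximal quotient of $\cS(\Qp^2)$ on which the centre acts via $|\cdot|^{-2h}\omega_\Pi^{-1}$. Its reducibility is governed by $h$ and $\omega_\Pi$, which are \emph{not} free to be generic -- the theorem must hold for every critical twist $h = t_1 + t_2 + j$ with $0 \le j \le \min(k_1,k_2)$ -- and for some such $h$ the representation $\sigma$ genuinely is a reducible principal series with a one-dimensional quotient, exactly the case the paper singles out. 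Your Kirillov-model assertion in Step~3, as written, only covers the case of three irreducible generic representations (the standard Prasad/JPSS range), so without a substitute for Harris--Scholl's theorem the at-most-one-dimensionality is not established at these values of $h$, and the lemma is not fully proved.

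Two smaller points worth flagging: (i) since $\sigma^\vee$ is again a full induced representation even when $\sigma$ is reducible, your Frobenius-reciprocity reduction is fine; the delicacy is entirely in bounding $\dim \Hom_B(\pi_1 \otimes \pi_2, \chi)$, where the mirabolic filtration of each $\pi_i$ contributes a finite-dimensional piece that can produce an ``extra'' functional precisely at the special values of $\chi$; (ii) the citation of \cite{LSZ21} is not really on point -- the relevant reference for the reducible case is Harris--Scholl.
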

  
  \begin{proof}
   Any such linear functional must factor through the maximal quotient $\sigma$ of $\cS(\Qp^2)$ on which the centre of $\GL_2(\Qp)$ acts via the character $|\cdot|^{-2h} \omega_{\Pi}^{-1}$. This quotient is either an irreducible principal series, or a reducible principal series with 1-dimensional quotient; in either case it is a Whittaker-type representation, so the main result of \cite{harrisscholl} shows that $\Hom_{\GL_2(\Qp)}(\Pi_p \times \sigma |\cdot|^h, \CC)$ is at most 1-dimensional. To complete the proof it suffices to check that $Z^{\mathrm{an}, j}(W, \Phi)$ has the required equivariance property (which is obvious) and is non-zero, which follows from the fact that it maps the spherical data to 1.
  \end{proof}
  
  So to prove Theorem \ref{thm:pstab}, which is a proportionality relation between values of the motivic linear functional $\mathrm{Z}^{\mathrm{mot}, j}$, it suffices to show that the corresponding linear relation holds between values of the analytic linear functional $\mathrm{Z}^{\mathrm{an}, j}$ instead. It is well known that we have $\mathrm{Z}^{\mathrm{an}, j}(W^{\sph}, \ch(\Zp^2)) = 1$, while an elementary computation gives
  \[ Z^{\mathrm{an}, j}(W_\alpha, \ch((0, 1) + p\Zp^2)) = 
   \frac{1}{(p^2 - 1)} \lim_{s \to h} \frac{\left(1 - \alpha_{\fp_1} \alpha_{\fp_2} p^{-1-s}\right)^{-1}}{L^{\As}(\Pi, s)} = \frac{1}{(p^2 - 1)}\mathcal{R}(p^{-1-h}). \]
  This gives the result claimed.

\end{document}